\title{The infinitesimal behavior of the sum of Cauchy kernels and its derivative at infinity}
\author{Vladimir Shemyakov \\
St. Petersburg, Russia \\
	\texttt{vladimir.v.shemyakov@gmail.com}
}
\date{\today}
\begin{document}

\maketitle

\newtheorem{definition}{Definition}
\newtheorem{Remark}{Remark}
\newtheorem{lema}{Lemma}
\newtheorem{thm}{Theorem}

\begin{abstract}
In analysis, it's often useful to know the value of a function at infinity, this operation possesses pleasant properties. However, even when the limit does not exist, some intuitive considerations may suggest that the function still assumes a specific value at infinity in a certain sense. In Nevanlinna theory, all objects are studied on average, i.e., their integrals, hence the integral interpretation of the concept of convergence to a limit is beneficial for the theory of meromorphic functions. This is precisely the focus of this work, applied to sums of Cauchy kernels and their derivatives.
\end{abstract}

\section{Introduction}
\begin{definition}
Let $\displaystyle \{c_{n}\}_{n\in \mathbb{N}} ,\{t_{n}\}_{n\in \mathbb{N}} \subset \mathbb{C}$, such that $\displaystyle \lim _{n\rightarrow \infty } |t_{n} |=\infty $ and $\displaystyle \{t_{n}\}_{n\in \mathbb{N}}$ has no finite limit points and 
\begin{equation*}
\sum _{n=1}^{\infty }\frac{|c_{n} |}{|t_{n} |} < +\infty \ -\ \text{the natural condition.}
\end{equation*}
Define a meromorphic function in $\displaystyle \mathbb{C}$ as $\displaystyle \sum _{n=1}^{\infty }\frac{c_{n}}{z-t_{n}}$ - the sum of Cauchy kernels.
\end{definition}
\begin{Remark}
The series converges uniformly on compact sets in $\displaystyle \mathbb{C}$ that do not contain poles, and it can be differentiated term by term infinitely many times. 
\end{Remark}
Here are some well-known examples of Cauchy kernel sums:
\begin{equation*}
\sum _{n=-\infty }^{\infty }\frac{( -1)^{n}}{z-n} =\frac{\pi }{\sin( \pi z)} ,\ \ \sum _{n=-\infty }^{\infty }\frac{( -1)^{n}}{n(z-n)} =\frac{\pi }{z\sin\left( \pi z\right)}-\frac{1}{z^2}, 
\end{equation*}
\begin{equation*}
\sum _{n=1}^{\infty }\frac{1}{n( z-n)} =\frac{\gamma +\Psi ( 1-z)}{z} ,\ \ \sum _{n=1}^{\infty }\frac{1}{n^{2}( z-n)} =\frac{\pi ^{2}}{6} +\frac{\gamma +\Psi ( 1-z)}{z^{2}},
\end{equation*}
\begin{equation*}
\sum _{n=1}^{\infty }\frac{1}{z-n^{2}} =\frac{\pi \cot\left( \pi \sqrt{z}\right)\sqrt{z} -1}{2z} ,\ \ \ \sum _{n=1}^{\infty }\frac{1}{2^{n}( z-n)} =-\frac{\Phi \left(\frac{1}{2} ,1,-z\right) z+1}{z},
\end{equation*}
where $\displaystyle \Psi =\frac{\Gamma '}{\Gamma }$, $\displaystyle \Gamma $ - Euler's Gamma function, $\displaystyle \Phi $ - Lerch's zeta function.

For such functions, it is clear that a limit cannot exist at infinity:
\begin{equation*}
\nexists \lim _{z\rightarrow \infty } \ \sum _{n=1}^{\infty }\frac{c_{n}}{z-t_{n}},
\end{equation*}
since there are infinitely many poles in any neighborhood of $\displaystyle \infty $, and the function takes on infinite values at them. But for finite sums, the limit is zero:
\begin{equation*}
\lim _{z\rightarrow \infty } \ \sum _{n=1}^{N}\frac{c_{n}}{z-t_{n}} =0.
\end{equation*}
Therefore, intuition suggests that in the proper sense, the function should still tend to zero.

M.V. Keldysh in the work (\cite{Keldysh}, 6, p. 159) showed that
\begin{equation*}\label{eq:keld}
\int\limits _{0}^{2\pi }\ln^{+}\left| \sum _{n=1}^{\infty }\frac{c_{n}}{re^{i\varphi } -t_{n}}\right| d\varphi \leqslant 2\left(1+\frac{1}{r}\sum\limits_{|t_n|<r}|c_n|+\sum\limits_{|t_n|>r}\frac{|c_n|}{|t_n|}\right)=O(1),
\end{equation*}
where $\ln^{+} =\max(1,\ln)$ and integration is carried out where the modulus of the function $\displaystyle >1$. This integral is important in the theory of value distribution of meromorphic functions and is one of the components of the Nevanlinna characteristic. Thanks to Chebyshev's inequality, a pointwise statement can be obtained
\begin{gather*}
\forall \{r_{k}\}_{k\in \mathbb{N}}  >0\ \text{strictly increasing},\ \forall \varepsilon \in ( 0;2\pi ) \ \  \exists \{E_{k}\}_{k\in \mathbb{N}} -\ \text{measurable},\ E_{k} \subset [ 0,2\pi ] \ \text{и} \ |E_{k} | >2\pi -\varepsilon  \\
\text{such that} \ \ \forall k\in \mathbb{N} \ \ \ \forall \varphi \in E_{k} \ \ \ \left| f\left( r_{k} e^{i\varphi }\right)\right| \leqslant e^{\frac{M}{\varepsilon }},
\end{gather*}
where $\displaystyle M$ denotes the supremum of this integral, and $\displaystyle |E|$ denotes the Lebesgue measure of the set $\displaystyle E$. The figure illustrates the idea that the function is bounded on each circle outside of a small 'bad' set.

\tikzset{every picture/.style={line width=0.75pt}} %set default line width to 0.75pt        

\begin{tikzpicture}[x=0.75pt,y=0.75pt,yscale=-1,xscale=1]
%uncomment if require: \path (0,245); %set diagram left start at 0, and has height of 245
\path (40,230);
%Shape: Ellipse [id:dp19577331343809812] 
\draw  [color={rgb, 255:red, 0; green, 0; blue, 0 }  ,draw opacity=1 ] (256.3,126.03) .. controls (256.3,68.68) and (302.79,22.19) .. (360.14,22.19) .. controls (417.49,22.19) and (463.98,68.68) .. (463.98,126.03) .. controls (463.98,183.38) and (417.49,229.87) .. (360.14,229.87) .. controls (302.79,229.87) and (256.3,183.38) .. (256.3,126.03) -- cycle;
%Shape: Circle [id:dp8384231250595517] 
\draw  [color={rgb, 255:red, 0; green, 0; blue, 0 }  ,draw opacity=1 ] (312.12,126.03) .. controls (312.12,99.51) and (333.62,78.01) .. (360.14,78.01) .. controls (386.66,78.01) and (408.16,99.51) .. (408.16,126.03) .. controls (408.16,152.55) and (386.66,174.05) .. (360.14,174.05) .. controls (333.62,174.05) and (312.12,152.55) .. (312.12,126.03) -- cycle ;
%Shape: Ellipse [id:dp9587551221425423] 
\draw  [color={rgb, 255:red, 0; green, 0; blue, 0 }  ,draw opacity=1 ] (291.67,126.03) .. controls (291.67,88.21) and (322.33,57.55) .. (360.14,57.55) .. controls (397.96,57.55) and (428.61,88.21) .. (428.61,126.03) .. controls (428.61,163.84) and (397.96,194.5) .. (360.14,194.5) .. controls (322.33,194.5) and (291.67,163.84) .. (291.67,126.03) -- cycle ;
%Shape: Arc [id:dp6271131580210478] 
\draw  [draw opacity=0][line width=1]  (401.53,150.42) .. controls (399.51,153.84) and (397.08,156.98) .. (394.31,159.79) -- (360.14,126.03) -- cycle ; \draw [color={rgb, 255:red, 189; green, 16; blue, 224 }  ,draw opacity=1 ][line width=1]    (401.53,150.42) .. controls (399.51,153.84) and (397.08,156.98) .. (394.31,159.79) ;  
%Shape: Arc [id:dp914965916866562] 
\draw  [draw opacity=0][line width=1]  (313.16,175.84) .. controls (305.03,168.16) and (298.78,158.53) .. (295.16,147.68) -- (360.14,126.03) -- cycle ; \draw [color={rgb, 255:red, 189; green, 16; blue, 224 }  ,draw opacity=1 ][line width=1]    (313.16,175.84) .. controls (305.03,168.16) and (298.78,158.53) .. (295.16,147.68) ;  
%Shape: Arc [id:dp7780309477705498] 
\draw  [draw opacity=0][line width=1]  (392.72,27.41) .. controls (397.7,29.06) and (402.52,31.07) .. (407.14,33.42) -- (360.14,126.03) -- cycle ; \draw [color={rgb, 255:red, 189; green, 16; blue, 224 }  ,draw opacity=1 ][line width=1]    (392.72,27.41) .. controls (397.7,29.06) and (402.52,31.07) .. (407.14,33.42) ;  
%Shape: Arc [id:dp7258771167999192] 
\draw  [draw opacity=0][line width=1]  (353.4,229.64) .. controls (349.36,229.38) and (345.33,228.88) .. (341.32,228.15) -- (360.14,126.03) -- cycle ; \draw [color={rgb, 255:red, 189; green, 16; blue, 224 }  ,draw opacity=1 ][line width=1]    (353.4,229.64) .. controls (349.36,229.38) and (345.33,228.88) .. (341.32,228.15) ;  
%Shape: Arc [id:dp9836366327788082] 
\draw  [draw opacity=0][line width=1]  (372.47,229.12) .. controls (368.45,229.61) and (364.39,229.86) .. (360.32,229.87) -- (360.14,126.03) -- cycle ; \draw [color={rgb, 255:red, 189; green, 16; blue, 224 }  ,draw opacity=1 ][line width=1]    (372.47,229.12) .. controls (368.45,229.61) and (364.39,229.86) .. (360.32,229.87) ;  
%Shape: Arc [id:dp5013565003651035] 
\draw  [draw opacity=0][dash pattern={on 3.38pt off 3.27pt}][line width=1]  (437.76,194.99) .. controls (435.07,198.02) and (432.2,200.9) .. (429.17,203.61) -- (360.14,126.03) -- cycle ; \draw [color={rgb, 255:red, 189; green, 16; blue, 224 }  ,draw opacity=1 ][dash pattern={on 3.38pt off 3.27pt}][line width=1]  [dash pattern={on 3.38pt off 3.27pt}]  (437.76,194.99) .. controls (435.07,198.02) and (432.2,200.9) .. (429.17,203.61) ;  
%Shape: Arc [id:dp3566108083724129] 
\draw  [draw opacity=0][line width=1]  (257.24,139.87) .. controls (256.41,133.72) and (256.12,127.49) .. (256.4,121.24) -- (360.14,126.03) -- cycle ; \draw [color={rgb, 255:red, 189; green, 16; blue, 224 }  ,draw opacity=1 ][line width=1]    (257.24,139.87) .. controls (256.41,133.72) and (256.12,127.49) .. (256.4,121.24) ;  
%Shape: Arc [id:dp5287110773202217] 
\draw  [draw opacity=0][line width=1]  (326.88,66.17) .. controls (330.6,64.11) and (334.51,62.39) .. (338.56,61.04) -- (360.14,126.03) -- cycle ; \draw [color={rgb, 255:red, 189; green, 16; blue, 224 }  ,draw opacity=1 ][line width=1]    (326.88,66.17) .. controls (330.6,64.11) and (334.51,62.39) .. (338.56,61.04) ;  
%Shape: Arc [id:dp39113288060370444] 
\draw  [draw opacity=0][line width=1]  (322.12,155.39) .. controls (320.4,153.15) and (318.88,150.77) .. (317.57,148.28) -- (360.14,126.03) -- cycle ; \draw [color={rgb, 255:red, 189; green, 16; blue, 224 }  ,draw opacity=1 ][line width=1]    (322.12,155.39) .. controls (320.4,153.15) and (318.88,150.77) .. (317.57,148.28) ;  
%Straight Lines [id:da4957159871615606] 
\draw    (482.28,84.79) -- (462.1,96.44) ;
\draw [shift={(460.36,97.44)}, rotate = 330] [color={rgb, 255:red, 0; green, 0; blue, 0 }  ][line width=0.75]    (10.93,-3.29) .. controls (6.95,-1.4) and (3.31,-0.3) .. (0,0) .. controls (3.31,0.3) and (6.95,1.4) .. (10.93,3.29)   ;
%Straight Lines [id:da3106344812870654] 
\draw    (189.44,72.07) -- (254.15,130.9) ;
\draw [shift={(255.63,132.24)}, rotate = 222.28] [color={rgb, 255:red, 0; green, 0; blue, 0 }  ][line width=0.75]    (10.93,-3.29) .. controls (6.95,-1.4) and (3.31,-0.3) .. (0,0) .. controls (3.31,0.3) and (6.95,1.4) .. (10.93,3.29)   ;
%Straight Lines [id:da30083014836553246] 
\draw    (236.9,61.68) -- (332.27,63.95) ;
\draw [shift={(334.27,64)}, rotate = 181.36] [color={rgb, 255:red, 0; green, 0; blue, 0 }  ][line width=0.75]    (10.93,-3.29) .. controls (6.95,-1.4) and (3.31,-0.3) .. (0,0) .. controls (3.31,0.3) and (6.95,1.4) .. (10.93,3.29)   ;
%Straight Lines [id:da05991757524234509] 
\draw    (228.06,70.32) -- (299.57,159.61) ;
\draw [shift={(300.82,161.17)}, rotate = 231.31] [color={rgb, 255:red, 0; green, 0; blue, 0 }  ][line width=0.75]    (10.93,-3.29) .. controls (6.95,-1.4) and (3.31,-0.3) .. (0,0) .. controls (3.31,0.3) and (6.95,1.4) .. (10.93,3.29)   ;

% Text Node
\draw (483.27,68.12) node [anchor=north west][inner sep=0.75pt]    {$|z|=r_{n}$};
% Text Node
\draw (122.16,36.86) node [anchor=north west][inner sep=0.75pt]   [align=left] {Here the function \\ can be large};

\end{tikzpicture}

Later, I.V. Ostrovskiy significantly strengthened Keldysh's result in the work (\cite{Book_Gold_Ostr}, 6, p. 252, th. 6.1), showing that
\begin{equation*}\label{eq:Ostrov}
\forall p\in ( 0;1) \ \ \int\limits _{0}^{2\pi }\left| \sum _{n=1}^{\infty }\frac{c_{n}}{re^{i\varphi } -t_{n}}\right| ^{p} d\varphi\leqslant \frac{8\pi}{\cos\frac{\pi p}{2}}\left(\left(\sum\limits_{|t_n|>r}\frac{|c_n|}{|t_n|}\right)^p+\left(\frac{1}{r}\sum\limits_{|t_n|<r}|c_n|\right)^p\right)\xrightarrow[r\rightarrow \infty ]{} 0.
\end{equation*}
From this, thanks to the inequality $\ln^{+} a\leqslant \frac{a^{p}}{p}$, it follows that the integral from Keldysh's result \ref{eq:keld} simply tends to zero
\begin{equation*}
\lim _{r\rightarrow \infty }\int\limits _{0}^{2\pi }\ln^{+}\left| \sum _{n=1}^{\infty }\frac{c_{n}}{re^{i\varphi } -t_{n}}\right| d\varphi =0.
\end{equation*}
Now, let's again derive the pointwise version from the Chebyshev inequality
\begin{gather*}
\forall \{r_{k}\}_{k\in \mathbb{N}}  >0\ \text{strictly increasing},\ \forall \varepsilon \in ( 0;2\pi ) \ \  \exists \{E_{k}\}_{k\in \mathbb{N}} -\ \text{measurable},\ E_{k} \subset [ 0,2\pi ] \ \text{и} \ |E_{k} | >2\pi -\varepsilon \\
\text{such that} \ \ \lim _{k\rightarrow \infty } \ \sup _{\varphi \in E_{k}}\left| f\left( r_{k} e^{i\varphi }\right)\right| =0.
\end{gather*}
Looking at the figure, this means that the function uniformly tends to zero along the sequence of circles outside \textcolor[rgb]{0.56,0.07,1}{the arcs (or simply measurable sets)} of arbitrarily small angular measure.

\section{The smallness of the derivative of the sum of Cauchy kernels at infinity}

I needed similar results for working with derivatives of sums of Cauchy kernels, i.e., for functions of the form:
\begin{equation*}
\sum \limits_{n=1}^{\infty }\frac{c_{n}}{( z-t_{n})^{2}}
\end{equation*}
These are useful for studying the distribution of zeros, their growth, and most importantly, the existence by means of Nevanlinna theory. I managed to partially extend the results above, albeit with some limitations. The proof will be partially based on the original reasoning of I.V. Ostrovsky. To do this, let's present and prove a lemma by V.I. Smirnov

\begin{lema}[V.I. Smirnov, \cite{Book_Gold_Ostr}, 6, стр. 253, lemma 6.1] \label{lem:smirnova}
Let $\displaystyle f$ be analytic in the disk $\displaystyle |z|< r$ and $\displaystyle 0< p< 1$. Then
\begin{equation*}
\left[ \begin{array}{l l}
\Re f\ \text{preserves sign} \\
\Im f\ \text{preserves sign}
\end{array} \right. \Longrightarrow \ \int\limits _{0}^{2\pi } \varliminf _{ \begin{array}{l}
z\rightarrow re^{i\varphi }\\
|z|< r
\end{array}} |f( z) |^{p} d\varphi \leqslant \frac{2\pi }{\cos\frac{\pi p}{2}} |f( 0) |^{p}.
\end{equation*}
\end{lema}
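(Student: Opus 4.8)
The plan is to reduce to the case in which $f$ maps the disk into a half-plane, raise $f$ to the power $p$ so that the image lands in a sector of half-opening $\tfrac{\pi p}{2}<\tfrac{\pi}{2}$, and then combine positivity of the real part with the mean value property and Fatou's lemma.

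First I would dispose of the hypothesis on $\Im f$: replacing $f$ by $\pm i f$ leaves $|f|$ and $|f(0)|$ unchanged and converts a sign condition on $\Im f$ into one on $\Re f$, and replacing $f$ by $-f$ if need be, I may assume $\Re f\geqslant 0$ on $|z|<r$. If $\Re f$ vanishes at an interior point, then by the minimum principle for the harmonic function $\Re f$ it is identically zero, so $f$ is a purely imaginary constant $ic$; in that case the left-hand side equals $2\pi|c|^{p}\leqslant\frac{2\pi}{\cos(\pi p/2)}|c|^{p}$ since $0<\cos\frac{\pi p}{2}<1$, and the lemma holds. Otherwise $\Re f>0$, so $f$ omits $0$ and maps $|z|<r$ into the open right half-plane, which is simply connected and $0$-free; hence $g:=f^{p}$ (principal branch, $\Im\log f\in(-\tfrac{\pi}{2},\tfrac{\pi}{2})$) is a well-defined analytic function there.

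The heart of the argument is an elementary geometric inequality. Writing $f(z)=\rho e^{i\theta}$ with $|\theta|<\tfrac{\pi}{2}$, we get $g(z)=\rho^{p}e^{ip\theta}$ with $|p\theta|<\tfrac{\pi p}{2}<\tfrac{\pi}{2}$, so, since cosine decreases on $[0,\tfrac{\pi}{2}]$,
\[
\Re g(z)=\rho^{p}\cos(p\theta)\;\geqslant\;\cos\!\Big(\tfrac{\pi p}{2}\Big)\,|f(z)|^{p}\;\geqslant\;0 .
\]
In particular $\Re g$ is a nonnegative harmonic function, so the mean value property gives $\frac{1}{2\pi}\int_{0}^{2\pi}\Re g(r'e^{i\varphi})\,d\varphi=\Re g(0)$ for every $r'<r$.

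It remains to let $r'\to r^{-}$. The radial segment is one admissible way of approaching $re^{i\varphi}$ from inside the disk, so $\varliminf_{z\to re^{i\varphi}}|f(z)|^{p}\leqslant\liminf_{r'\to r^{-}}|f(r'e^{i\varphi})|^{p}$, and together with the displayed inequality $\cos(\tfrac{\pi p}{2})\,\varliminf_{z\to re^{i\varphi}}|f(z)|^{p}\leqslant\liminf_{r'\to r^{-}}\Re g(r'e^{i\varphi})$. Applying Fatou's lemma — legitimate because $\Re g(r'e^{i\varphi})\geqslant 0$ — and then the mean value identity,
\[
\cos\!\Big(\tfrac{\pi p}{2}\Big)\int_{0}^{2\pi}\varliminf_{z\to re^{i\varphi}}|f(z)|^{p}\,d\varphi\;\leqslant\;\liminf_{r'\to r^{-}}\int_{0}^{2\pi}\Re g(r'e^{i\varphi})\,d\varphi\;=\;2\pi\,\Re g(0)\;\leqslant\;2\pi\,|f(0)|^{p},
\]
where the last step uses $\Re w\leqslant|w|$ and $|f(0)^{p}|=|f(0)|^{p}$; dividing by $\cos\frac{\pi p}{2}$ yields the claim. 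The one delicate point is this final paragraph, namely being careful that the unrestricted $\varliminf$ is controlled by its radial analogue so that Fatou's lemma can be applied on the circles $|z|=r'$; the passage to the half-plane and the sector, together with the cosine inequality, are routine once set up.
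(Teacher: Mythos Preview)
Your proof is correct and follows essentially the same route as the paper: reduce to $\Re f>0$, take the principal $p$-th power, use the sector inequality $\Re(f^{p})\geqslant\cos\frac{\pi p}{2}\,|f|^{p}$, apply the mean value property on circles $|z|=r'$, and pass to the boundary via Fatou. You are in fact a bit more careful than the paper in handling the degenerate constant case and in spelling out why the unrestricted $\varliminf$ is dominated by the radial one before invoking Fatou.
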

\begin{proof}
Without loss of generality, let's assume that $\displaystyle \Re f >0$ in the disk $\displaystyle |z|< r$; otherwise, we multiply $\displaystyle f$ by $\displaystyle i$ or $\displaystyle -1$. Then $\displaystyle |\arg f|< \frac{\pi }{2}$ and $\displaystyle f^{p}$ is analytic in the disk $\displaystyle |z|< r$. Therefore, $\displaystyle \Re \left( f^{p}\right)$ is harmonic there, and
\begin{equation*}
\Re \left( f^{p}\right) =|f|^{p} \cdot \cos( p\arg f) \geqslant |f|^{p} \cdot \cos\frac{\pi p}{2}.
\end{equation*} 
Поэтому
\begin{gather*}
\int\limits _{0}^{2\pi } \varliminf _{ \begin{array}{l}
z\rightarrow re^{i\varphi }\\
|z|< r
\end{array}} |f( z) |^{p} d\varphi \underset{ \begin{array}{l}
\text{Lemma}\\
\text{Fatu}
\end{array}}{\leqslant }\varliminf _{ \begin{array}{l}
z\rightarrow re^{i\varphi }\\
|z|< r
\end{array}}\int\limits _{0}^{2\pi } |f( z) |^{p} d\varphi \leqslant \frac{1}{\cos\frac{\pi p}{2}}\varliminf _{ \begin{array}{l}
z\rightarrow re^{i\varphi }\\
|z|< r
\end{array}}\int\limits _{0}^{2\pi } \Re \left( f^{p}( z)\right) d\varphi =\\
\xlongequal[ \begin{array}{l}
\text{Theorem on} \\
\text{the Mean}
\end{array}]{}\frac{2\pi }{\cos\frac{\pi p}{2}}\left| \Re \left( f^{p}( 0)\right)\right| \leqslant \frac{2\pi }{\cos\frac{\pi p}{2}} |f( 0) |^{p}.
\end{gather*}
\end{proof}
Now, using Smirnov's lemma [\ref{lem:smirnova}], let's estimate the integral of the tail of our series.

\begin{lema}[The tail of the series]\label{lem:tail}
Let $\displaystyle \{c_{n}\}_{n\in \mathbb{N}} ,\{t_{n}\}_{n\in \mathbb{N}} \subset \mathbb{C}$ such that $\displaystyle \lim _{n\rightarrow \infty } |t_{n} |=\infty $ and $\displaystyle \{t_{n}\}_{n\in \mathbb{N}}$ has no finite limit points, and
\begin{equation*}
\sum _{n=1}^{\infty }\frac{|c_{n} |}{|t_{n} |^{2}} < +\infty \ -\ \text{the natural condition}.
\end{equation*}
Then for each $\displaystyle 0< p< 1$ 
\begin{equation*}
\int\limits _{0}^{2\pi }\left| \sum _{|t_{n} | >r\sqrt{2}}\frac{c_{n}}{\left( re^{i\varphi } -t_{n}\right)^{2}}\right| ^{p} d\varphi \leqslant \frac{8\pi }{\cos\frac{\pi p}{2}}\left(\sum\limits _{|t_{n} | >r\sqrt{2}}\frac{|c_{n} |}{|t_{n} |^{2}}\right)^{p}\xrightarrow[r\rightarrow \infty ]{} 0.
\end{equation*}
\end{lema}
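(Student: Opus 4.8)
The plan is to adapt I.V. Ostrovsky's decomposition argument to the square kernel. Set $g_{r}(z)=\sum_{|t_{n}|>r\sqrt{2}}\frac{c_{n}}{(z-t_{n})^{2}}$; by the natural condition of the lemma the series converges uniformly on compact subsets of $|z|<r\sqrt{2}$ (only finitely many $t_{n}$ lie in any bounded region, and for the rest $|z-t_{n}|\geqslant\frac12|t_{n}|$, so the tail is dominated by $4\sum|c_{n}|/|t_{n}|^{2}$), hence $g_{r}$ is analytic in $|z|<r\sqrt{2}$, in particular on a neighbourhood of $|z|\leqslant r$. The idea is to split $g_{r}$ into four analytic pieces, each having $\Re$ or $\Im$ of one sign on the disk $|z|<r$, to apply Smirnov's Lemma~\ref{lem:smirnova} to each with radius $r$, and to recombine via $(a_{1}+a_{2}+a_{3}+a_{4})^{p}\leqslant a_{1}^{p}+a_{2}^{p}+a_{3}^{p}+a_{4}^{p}$ for $0<p<1$, $a_{j}\geqslant 0$.

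First I would isolate the geometry behind the constant $\sqrt{2}$. For $|z|<r$ and $|t_{n}|>r\sqrt{2}$ one has $|z/t_{n}|<1/\sqrt{2}=\sin\frac{\pi}{4}$, so $1-z/t_{n}$ lies in the open disk of radius $1/\sqrt{2}$ about $1$, which is contained in the sector $|\arg w|<\frac{\pi}{4}$ (the rays at angles $\pm\frac{\pi}{4}$ are tangent to its boundary). Hence, putting $\zeta_{n}(z)=\frac{1}{(1-z/t_{n})^{2}}=\frac{t_{n}^{2}}{(z-t_{n})^{2}}$, we get $|\arg\zeta_{n}(z)|<\frac{\pi}{2}$, i.e.\ $\Re\zeta_{n}(z)>0$ on $|z|<r$; also $\zeta_{n}(0)=1$. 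Writing $\frac{c_{n}}{t_{n}^{2}}=A_{n}+iB_{n}$ with $A_{n},B_{n}\in\mathbb{R}$ and using $\frac{c_{n}}{(z-t_{n})^{2}}=(A_{n}+iB_{n})\zeta_{n}(z)$, I would set
\[
G_{1}=\sum_{A_{n}>0}A_{n}\zeta_{n},\qquad G_{2}=\sum_{A_{n}<0}A_{n}\zeta_{n},\qquad G_{3}=\sum_{B_{n}>0}iB_{n}\zeta_{n},\qquad G_{4}=\sum_{B_{n}<0}iB_{n}\zeta_{n},
\]
so that $g_{r}=G_{1}+G_{2}+G_{3}+G_{4}$, each $G_{j}$ analytic in $|z|<r\sqrt{2}$ (same uniform convergence estimate). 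Since $\Re\zeta_{n}>0$ on $|z|<r$ we have $\Re G_{1}>0$, $\Re G_{2}<0$; since $\Im(i\zeta_{n})=\Re\zeta_{n}>0$ we have $\Im G_{3}>0$, $\Im G_{4}<0$ there. (Empty index sets give the zero function, which is harmless.)

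Now Lemma~\ref{lem:smirnova} applied to each $G_{j}$ on the disk $|z|<r$ gives $\int_{0}^{2\pi}|G_{j}(re^{i\varphi})|^{p}\,d\varphi\leqslant\frac{2\pi}{\cos(\pi p/2)}|G_{j}(0)|^{p}$; the $\varliminf$ as $z\to re^{i\varphi}$ from inside equals $|G_{j}(re^{i\varphi})|^{p}$ by continuity, since every pole satisfies $|t_{n}|>r$. Because $\zeta_{n}(0)=1$, $G_{j}(0)$ is a partial sum of $\{A_{n}\}$ or of $\{iB_{n}\}$, and $|A_{n}|,|B_{n}|\leqslant|c_{n}/t_{n}^{2}|=|c_{n}|/|t_{n}|^{2}$, so $|G_{j}(0)|\leqslant\sum_{|t_{n}|>r\sqrt{2}}|c_{n}|/|t_{n}|^{2}$ for each $j$. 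Then from $|g_{r}|^{p}=|G_{1}+G_{2}+G_{3}+G_{4}|^{p}\leqslant\sum_{j=1}^{4}|G_{j}|^{p}$, integrating and summing the four bounds yields $\int_{0}^{2\pi}|g_{r}(re^{i\varphi})|^{p}\,d\varphi\leqslant\frac{8\pi}{\cos(\pi p/2)}\bigl(\sum_{|t_{n}|>r\sqrt{2}}|c_{n}|/|t_{n}|^{2}\bigr)^{p}$, which tends to $0$ as $r\to\infty$ since it is a fixed power of the tail of the convergent series $\sum|c_{n}|/|t_{n}|^{2}$.

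The only non-mechanical point is choosing this particular four-fold splitting: one must factor out the constant $c_{n}/t_{n}^{2}$ and partition by the signs of its real and imaginary parts. That choice simultaneously makes $\Re$ (resp.\ $\Im$) of each group of constant sign --- via $\Re\zeta_{n}>0$ --- and keeps each group's value at the origin bounded by $\sum|c_{n}|/|t_{n}|^{2}$ --- via $\zeta_{n}(0)=1$. One also has to notice that $|t_{n}|>r\sqrt{2}$ is exactly the hypothesis that forces $\Re\zeta_{n}>0$ on $|z|<r$: since squaring doubles the argument, only a quarter-turn of slack is available here, in contrast to the half-turn that suffices for Ostrovsky's first-power kernel. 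Everything else --- uniform convergence of the subseries, continuity up to $|z|=r$, and the subadditivity of $x\mapsto x^{p}$ --- is routine.
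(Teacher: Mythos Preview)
Your proof is correct and is essentially the same as the paper's: both split the tail into four analytic pieces with $\Re$ or $\Im$ of constant sign on $|z|<r$, apply Smirnov's lemma to each, and recombine via subadditivity of $x\mapsto x^{p}$. The only cosmetic difference is that the paper factors the kernel as $e^{2i\arg t_{n}}/(z-t_{n})^{2}$ and splits by the signs of $\Re(c_{n}e^{-2i\arg t_{n}})$ and $\Im(c_{n}e^{-2i\arg t_{n}})$, whereas you factor as $\zeta_{n}(z)=t_{n}^{2}/(z-t_{n})^{2}$ and split by the signs of $A_{n}=\Re(c_{n}/t_{n}^{2})$ and $B_{n}=\Im(c_{n}/t_{n}^{2})$; since these differ only by the positive scalar $|t_{n}|^{2}$, the two decompositions coincide term by term.
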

\begin{proof}
Let's transform our sum.
\begin{gather*}
\sum _{|t_{n} | >r\sqrt{2}}\frac{c_{n}}{( z-t_{n})^{2}} =\sum _{|t_{n} | >r\sqrt{2}}\frac{e^{2i\arg t_{n}} \cdot c_{n} e^{-2i\arg t_{n}}}{( z-t_{n})^{2}} =\\
=\sum _{|t_{n} | >r\sqrt{2}}\frac{e^{2i\arg t_{n}} \cdot \Re \left( c_{n} e^{-2i\arg t_{n}}\right)}{( z-t_{n})^{2}} +i\sum _{|t_{n} | >r\sqrt{2}}\frac{e^{2i\arg t_{n}} \cdot \Im \left( c_{n} e^{-2i\arg t_{n}}\right)}{( z-t_{n})^{2}} =\\
=\sum _{ \begin{array}{l}
|t_{n} | >r\sqrt{2}\\
\Re \left( c_{n} e^{-2i\arg t_{n}}\right)  >0
\end{array}}\frac{e^{2i\arg t_{n}} \cdot \Re \left( c_{n} e^{-2i\arg t_{n}}\right)}{( z-t_{n})^{2}} +\sum _{ \begin{array}{l}
|t_{n} | >r\sqrt{2}\\
\Re \left( c_{n} e^{-2i\arg t_{n}}\right) < 0
\end{array}}\frac{e^{2i\arg t_{n}} \cdot \Re \left( c_{n} e^{-2i\arg t_{n}}\right)}{( z-t_{n})^{2}} +\\
+i\sum _{ \begin{array}{l}
|t_{n} | >r\sqrt{2}\\
\Im \left( c_{n} e^{-2i\arg t_{n}}\right)  >0
\end{array}}\frac{e^{2i\arg t_{n}} \cdot \Im \left( c_{n} e^{-2i\arg t_{n}}\right)}{( z-t_{n})^{2}} +i\sum _{ \begin{array}{l}
|t_{n} | >r\sqrt{2}\\
\Im \left( c_{n} e^{-2i\arg t_{n}}\right) < 0
\end{array}}\frac{e^{2i\arg t_{n}} \cdot \Im \left( c_{n} e^{-2i\arg t_{n}}\right)}{( z-t_{n})^{2}}.
\end{gather*}
Let's denote these four sums as $\displaystyle F_{1}( z)$, $\displaystyle F_{2}( z)$, $\displaystyle F_{3}( z)$, $\displaystyle F_{4}( z)$, then these functions are analytic in the disk $\displaystyle |z|< r$. Now, to apply Smirnov's lemma, we need to examine the sign of $\displaystyle \Re $ or $\displaystyle \Im $. To do this, let's verify that
\begin{equation*}
\Re \left(\frac{e^{2i\arg t}}{( z-t)^{2}}\right)  >0\ \text{in the disk} \ |z|< r,\ |t| >r\sqrt{2}.
\end{equation*}
For me, the simplest way to understand this is to look at the conformal map. Notice that
\begin{equation*}
\frac{e^{2i\arg t}}{( z-t)^{2}} =z^{2} \circ ze^{i\arg t} \circ \frac{1}{z} \circ ( t-z).
\end{equation*}
Since $\displaystyle |t | >r\sqrt{2}$, when shifting $\displaystyle z\rightarrow t-z$, the zero will be outside the circle, and $\displaystyle \sin \xi =\frac{r}{|t|} < \frac{1}{\sqrt{2}}$, hence $\displaystyle 0< \xi < \frac{\pi }{4}$. That's why we'll obtain $\displaystyle \Re  >0$ as a result.

\tikzset{every picture/.style={line width=0.75pt}} %set default line width to 0.75pt        

\begin{tikzpicture}[x=0.75pt,y=0.75pt,yscale=-1,xscale=1]
%uncomment if require: \path (0,419); %set diagram left start at 0, and has height of 419

%Shape: Polygon Curved [id:ds3890950685498751] 
\draw  [fill={rgb, 255:red, 80; green, 227; blue, 194 }  ,fill opacity=1 ] (591.85,159.45) .. controls (603.27,135.63) and (616.65,151.93) .. (632.97,162.05) .. controls (649.29,172.16) and (673.12,187.18) .. (654.19,223.4) .. controls (635.26,259.63) and (591.64,258.01) .. (589.69,239.08) .. controls (587.73,220.15) and (606.54,227.32) .. (610.13,204.96) .. controls (613.72,182.61) and (580.43,183.28) .. (591.85,159.45) -- cycle ;
%Shape: Ellipse [id:dp43255504541216383] 
\draw  [fill={rgb, 255:red, 80; green, 227; blue, 194 }  ,fill opacity=1 ] (380.41,311.89) .. controls (380.41,298.58) and (391.2,287.79) .. (404.52,287.79) .. controls (417.83,287.79) and (428.62,298.58) .. (428.62,311.89) .. controls (428.62,325.21) and (417.83,336) .. (404.52,336) .. controls (391.2,336) and (380.41,325.21) .. (380.41,311.89) -- cycle ;
%Shape: Ellipse [id:dp45911030780472806] 
\draw  [fill={rgb, 255:red, 80; green, 227; blue, 194 }  ,fill opacity=1 ] (81.17,329.71) .. controls (81.17,316.39) and (91.97,305.6) .. (105.28,305.6) .. controls (118.59,305.6) and (129.39,316.39) .. (129.39,329.71) .. controls (129.39,343.02) and (118.59,353.81) .. (105.28,353.81) .. controls (91.97,353.81) and (81.17,343.02) .. (81.17,329.71) -- cycle ;
%Shape: Arc [id:dp7133583075858454] 
\draw  [draw opacity=0] (120.42,90.56) .. controls (122.04,86.69) and (124.87,83.47) .. (128.45,81.36) -- (138.22,97.99) -- cycle ; \draw  [color={rgb, 255:red, 208; green, 2; blue, 27 }  ,draw opacity=1 ] (120.42,90.56) .. controls (122.04,86.69) and (124.87,83.47) .. (128.45,81.36) ;  
%Shape: Arc [id:dp5710736460690204] 
\draw  [draw opacity=0] (115.87,88.39) .. controls (119.6,79.73) and (128.2,73.67) .. (138.22,73.67) .. controls (151.55,73.67) and (162.37,84.4) .. (162.53,97.69) -- (138.22,97.99) -- cycle ; \draw  [color={rgb, 255:red, 245; green, 166; blue, 35 }  ,draw opacity=1 ] (115.87,88.39) .. controls (119.6,79.73) and (128.2,73.67) .. (138.22,73.67) .. controls (151.55,73.67) and (162.37,84.4) .. (162.53,97.69) ;  
%Shape: Ellipse [id:dp8953568463518442] 
\draw  [fill={rgb, 255:red, 80; green, 227; blue, 194 }  ,fill opacity=1 ] (42.82,73.09) .. controls (42.82,52.59) and (59.44,35.97) .. (79.94,35.97) .. controls (100.44,35.97) and (117.06,52.59) .. (117.06,73.09) .. controls (117.06,93.59) and (100.44,110.21) .. (79.94,110.21) .. controls (59.44,110.21) and (42.82,93.59) .. (42.82,73.09) -- cycle ;
%Shape: Ellipse [id:dp580106372528199] 
\draw  [fill={rgb, 255:red, 80; green, 227; blue, 194 }  ,fill opacity=1 ] (331.31,97.27) .. controls (331.31,76.77) and (347.93,60.15) .. (368.43,60.15) .. controls (388.93,60.15) and (405.55,76.77) .. (405.55,97.27) .. controls (405.55,117.77) and (388.93,134.39) .. (368.43,134.39) .. controls (347.93,134.39) and (331.31,117.77) .. (331.31,97.27) -- cycle ;
%Straight Lines [id:da024002338041453797] 
\draw    (272.99,97.27) -- (461.87,97.27) ;
\draw [shift={(463.87,97.27)}, rotate = 180] [color={rgb, 255:red, 0; green, 0; blue, 0 }  ][line width=0.75]    (10.93,-3.29) .. controls (6.95,-1.4) and (3.31,-0.3) .. (0,0) .. controls (3.31,0.3) and (6.95,1.4) .. (10.93,3.29)   ;
%Straight Lines [id:da6452305670343037] 
\draw    (368.43,181.89) -- (368.43,14.65) ;
\draw [shift={(368.43,12.65)}, rotate = 90] [color={rgb, 255:red, 0; green, 0; blue, 0 }  ][line width=0.75]    (10.93,-3.29) .. controls (6.95,-1.4) and (3.31,-0.3) .. (0,0) .. controls (3.31,0.3) and (6.95,1.4) .. (10.93,3.29)   ;
%Straight Lines [id:da3938349181462817] 
\draw    (42.78,97.99) -- (231.66,97.99) ;
\draw [shift={(233.66,97.99)}, rotate = 180] [color={rgb, 255:red, 0; green, 0; blue, 0 }  ][line width=0.75]    (10.93,-3.29) .. controls (6.95,-1.4) and (3.31,-0.3) .. (0,0) .. controls (3.31,0.3) and (6.95,1.4) .. (10.93,3.29)   ;
%Straight Lines [id:da5450632464162593] 
\draw    (138.22,182.61) -- (138.22,15.37) ;
\draw [shift={(138.22,13.37)}, rotate = 90] [color={rgb, 255:red, 0; green, 0; blue, 0 }  ][line width=0.75]    (10.93,-3.29) .. controls (6.95,-1.4) and (3.31,-0.3) .. (0,0) .. controls (3.31,0.3) and (6.95,1.4) .. (10.93,3.29)   ;
%Straight Lines [id:da9614150288350785] 
\draw    (273.71,311.61) -- (462.6,311.61) ;
\draw [shift={(464.6,311.61)}, rotate = 180] [color={rgb, 255:red, 0; green, 0; blue, 0 }  ][line width=0.75]    (10.93,-3.29) .. controls (6.95,-1.4) and (3.31,-0.3) .. (0,0) .. controls (3.31,0.3) and (6.95,1.4) .. (10.93,3.29)   ;
%Straight Lines [id:da7911053937750194] 
\draw    (369.15,396.22) -- (369.15,228.99) ;
\draw [shift={(369.15,226.99)}, rotate = 90] [color={rgb, 255:red, 0; green, 0; blue, 0 }  ][line width=0.75]    (10.93,-3.29) .. controls (6.95,-1.4) and (3.31,-0.3) .. (0,0) .. controls (3.31,0.3) and (6.95,1.4) .. (10.93,3.29)   ;
%Straight Lines [id:da4772858702167069] 
\draw [color={rgb, 255:red, 0; green, 0; blue, 0 }  ,draw opacity=1 ][line width=1.5]    (306.19,148.87) -- (204.91,148.87) ;
\draw [shift={(201.91,148.87)}, rotate = 360] [color={rgb, 255:red, 0; green, 0; blue, 0 }  ,draw opacity=1 ][line width=1.5]    (14.21,-4.28) .. controls (9.04,-1.82) and (4.3,-0.39) .. (0,0) .. controls (4.3,0.39) and (9.04,1.82) .. (14.21,4.28)   ;
%Straight Lines [id:da9529724569412985] 
\draw    (79.94,73.09) -- (138.22,97.99) ;
\draw [shift={(79.94,73.09)}, rotate = 23.13] [color={rgb, 255:red, 0; green, 0; blue, 0 }  ][fill={rgb, 255:red, 0; green, 0; blue, 0 }  ][line width=0.75]      (0, 0) circle [x radius= 1.34, y radius= 1.34]   ;
%Straight Lines [id:da4100640419806907] 
\draw    (79.94,73.09) -- (111.88,53.88) ;
%Straight Lines [id:da2261256117780195] 
\draw    (111.88,53.88) -- (138.22,97.99) ;
%Shape: Rectangle [id:dp2803416180826981] 
\draw   (111.69,53.83) -- (114.01,57.48) -- (110.36,59.81) -- (108.03,56.16) -- cycle ;
%Straight Lines [id:da7067813522248607] 
\draw [color={rgb, 255:red, 208; green, 2; blue, 27 }  ,draw opacity=1 ]   (113.93,125.41) -- (120.06,92.52) ;
\draw [shift={(120.42,90.56)}, rotate = 100.54] [color={rgb, 255:red, 208; green, 2; blue, 27 }  ,draw opacity=1 ][line width=0.75]    (10.93,-3.29) .. controls (6.95,-1.4) and (3.31,-0.3) .. (0,0) .. controls (3.31,0.3) and (6.95,1.4) .. (10.93,3.29)   ;
%Straight Lines [id:da07840751418593839] 
\draw [color={rgb, 255:red, 245; green, 166; blue, 35 }  ,draw opacity=1 ]   (173.9,42.1) -- (153.1,75.12) ;
\draw [shift={(152.04,76.81)}, rotate = 302.21] [color={rgb, 255:red, 245; green, 166; blue, 35 }  ,draw opacity=1 ][line width=0.75]    (10.93,-3.29) .. controls (6.95,-1.4) and (3.31,-0.3) .. (0,0) .. controls (3.31,0.3) and (6.95,1.4) .. (10.93,3.29)   ;
%Straight Lines [id:da4415899230923359] 
\draw [color={rgb, 255:red, 0; green, 0; blue, 0 }  ,draw opacity=1 ][line width=1.5]    (98.02,170.84) -- (98.02,252.91) ;
\draw [shift={(98.02,255.91)}, rotate = 270] [color={rgb, 255:red, 0; green, 0; blue, 0 }  ,draw opacity=1 ][line width=1.5]    (14.21,-4.28) .. controls (9.04,-1.82) and (4.3,-0.39) .. (0,0) .. controls (4.3,0.39) and (9.04,1.82) .. (14.21,4.28)   ;
%Straight Lines [id:da5350111630814252] 
\draw    (105.28,329.71) -- (136.92,313.49) ;
\draw [shift={(105.28,329.71)}, rotate = 332.87] [color={rgb, 255:red, 0; green, 0; blue, 0 }  ][fill={rgb, 255:red, 0; green, 0; blue, 0 }  ][line width=0.75]      (0, 0) circle [x radius= 1.34, y radius= 1.34]   ;
%Straight Lines [id:da23928312666931828] 
\draw    (110.04,306.19) -- (136.92,313.49) ;
%Straight Lines [id:da2191887799733454] 
\draw    (105.28,329.71) -- (110.74,305.89) ;
%Shape: Rectangle [id:dp006809203484682058] 
\draw   (110.63,306.71) -- (114.09,307.48) -- (113.32,310.94) -- (109.86,310.18) -- cycle ;
%Shape: Arc [id:dp7270889344239215] 
\draw  [draw opacity=0] (128.24,317.51) .. controls (127.36,315.62) and (127.13,313.5) .. (127.56,311.49) -- (136.92,313.49) -- cycle ; \draw  [color={rgb, 255:red, 208; green, 2; blue, 27 }  ,draw opacity=1 ] (128.24,317.51) .. controls (127.36,315.62) and (127.13,313.5) .. (127.56,311.49) ;  
%Straight Lines [id:da7927356882578538] 
\draw    (41.33,313.23) -- (230.22,313.23) ;
\draw [shift={(232.22,313.23)}, rotate = 180] [color={rgb, 255:red, 0; green, 0; blue, 0 }  ][line width=0.75]    (10.93,-3.29) .. controls (6.95,-1.4) and (3.31,-0.3) .. (0,0) .. controls (3.31,0.3) and (6.95,1.4) .. (10.93,3.29)   ;
%Straight Lines [id:da9891949654740892] 
\draw    (136.77,397.85) -- (136.77,230.61) ;
\draw [shift={(136.77,228.61)}, rotate = 90] [color={rgb, 255:red, 0; green, 0; blue, 0 }  ][line width=0.75]    (10.93,-3.29) .. controls (6.95,-1.4) and (3.31,-0.3) .. (0,0) .. controls (3.31,0.3) and (6.95,1.4) .. (10.93,3.29)   ;
%Straight Lines [id:da9051503878337359] 
\draw    (78.64,358.15) -- (103.91,331.17) ;
\draw [shift={(105.28,329.71)}, rotate = 133.12] [color={rgb, 255:red, 0; green, 0; blue, 0 }  ][line width=0.75]    (10.93,-3.29) .. controls (6.95,-1.4) and (3.31,-0.3) .. (0,0) .. controls (3.31,0.3) and (6.95,1.4) .. (10.93,3.29)   ;
%Shape: Arc [id:dp675871221998738] 
\draw  [draw opacity=0] (153.64,313.42) .. controls (153.67,319.45) and (150.41,325.29) .. (144.69,328.29) .. controls (136.58,332.55) and (126.56,329.48) .. (122.22,321.44) -- (136.92,313.49) -- cycle ; \draw  [color={rgb, 255:red, 245; green, 166; blue, 35 }  ,draw opacity=1 ] (153.64,313.42) .. controls (153.67,319.45) and (150.41,325.29) .. (144.69,328.29) .. controls (136.58,332.55) and (126.56,329.48) .. (122.22,321.44) ;  
%Straight Lines [id:da5799637620614095] 
\draw [color={rgb, 255:red, 245; green, 166; blue, 35 }  ,draw opacity=1 ]   (160.91,346.46) -- (147.61,328.69) ;
\draw [shift={(146.41,327.09)}, rotate = 53.18] [color={rgb, 255:red, 245; green, 166; blue, 35 }  ,draw opacity=1 ][line width=0.75]    (10.93,-3.29) .. controls (6.95,-1.4) and (3.31,-0.3) .. (0,0) .. controls (3.31,0.3) and (6.95,1.4) .. (10.93,3.29)   ;
%Straight Lines [id:da5776248769641876] 
\draw    (404.52,311.89) -- (389.25,293.64) ;
%Shape: Rectangle [id:dp8233031870731242] 
\draw   (388.75,293.57) -- (391.65,296.78) -- (388.44,299.68) -- (385.53,296.47) -- cycle ;
%Straight Lines [id:da75668970600017] 
\draw    (388.81,293.57) -- (369.15,311.61) ;
%Shape: Arc [id:dp3120619636733142] 
\draw  [draw opacity=0] (375.33,305.59) .. controls (376.84,307.14) and (377.71,309.21) .. (377.78,311.37) -- (369.15,311.61) -- cycle ; \draw  [color={rgb, 255:red, 208; green, 2; blue, 27 }  ,draw opacity=1 ] (375.33,305.59) .. controls (376.84,307.14) and (377.71,309.21) .. (377.78,311.37) ;  
%Straight Lines [id:da6756134973820416] 
\draw [color={rgb, 255:red, 208; green, 2; blue, 27 }  ,draw opacity=1 ]   (392.14,264.19) -- (376.08,303.73) ;
\draw [shift={(375.33,305.59)}, rotate = 292.1] [color={rgb, 255:red, 208; green, 2; blue, 27 }  ,draw opacity=1 ][line width=0.75]    (10.93,-3.29) .. controls (6.95,-1.4) and (3.31,-0.3) .. (0,0) .. controls (3.31,0.3) and (6.95,1.4) .. (10.93,3.29)   ;
%Straight Lines [id:da813658478039601] 
\draw [color={rgb, 255:red, 0; green, 0; blue, 0 }  ,draw opacity=1 ][line width=1.5]    (302.75,268.81) -- (201.47,268.81) ;
\draw [shift={(305.75,268.81)}, rotate = 180] [color={rgb, 255:red, 0; green, 0; blue, 0 }  ,draw opacity=1 ][line width=1.5]    (14.21,-4.28) .. controls (9.04,-1.82) and (4.3,-0.39) .. (0,0) .. controls (4.3,0.39) and (9.04,1.82) .. (14.21,4.28)   ;
%Straight Lines [id:da4138601715591619] 
\draw    (419.85,52.89) -- (391.62,76.24) ;
\draw [shift={(390.08,77.51)}, rotate = 320.4] [color={rgb, 255:red, 0; green, 0; blue, 0 }  ][line width=0.75]    (10.93,-3.29) .. controls (6.95,-1.4) and (3.31,-0.3) .. (0,0) .. controls (3.31,0.3) and (6.95,1.4) .. (10.93,3.29)   ;
%Straight Lines [id:da7705763533962224] 
\draw    (491.12,208.23) -- (680,208.23) ;
\draw [shift={(682,208.23)}, rotate = 180] [color={rgb, 255:red, 0; green, 0; blue, 0 }  ][line width=0.75]    (10.93,-3.29) .. controls (6.95,-1.4) and (3.31,-0.3) .. (0,0) .. controls (3.31,0.3) and (6.95,1.4) .. (10.93,3.29)   ;
%Straight Lines [id:da3416175845762468] 
\draw    (586.56,292.84) -- (586.56,125.61) ;
\draw [shift={(586.56,123.61)}, rotate = 90] [color={rgb, 255:red, 0; green, 0; blue, 0 }  ][line width=0.75]    (10.93,-3.29) .. controls (6.95,-1.4) and (3.31,-0.3) .. (0,0) .. controls (3.31,0.3) and (6.95,1.4) .. (10.93,3.29)   ;
%Straight Lines [id:da06609163118248174] 
\draw [color={rgb, 255:red, 0; green, 0; blue, 0 }  ,draw opacity=1 ][line width=1.5]    (477.22,284.18) -- (550.38,243.51) ;
\draw [shift={(553,242.05)}, rotate = 150.93] [color={rgb, 255:red, 0; green, 0; blue, 0 }  ,draw opacity=1 ][line width=1.5]    (14.21,-4.28) .. controls (9.04,-1.82) and (4.3,-0.39) .. (0,0) .. controls (4.3,0.39) and (9.04,1.82) .. (14.21,4.28)   ;
%Straight Lines [id:da8538082041131196] 
\draw [color={rgb, 255:red, 189; green, 16; blue, 224 }  ,draw opacity=1 ][line width=1.5]    (434.47,141.83) -- (541.47,174.6) ;
\draw [shift={(544.34,175.48)}, rotate = 197.03] [color={rgb, 255:red, 189; green, 16; blue, 224 }  ,draw opacity=1 ][line width=1.5]    (14.21,-4.28) .. controls (9.04,-1.82) and (4.3,-0.39) .. (0,0) .. controls (4.3,0.39) and (9.04,1.82) .. (14.21,4.28)   ;

% Text Node
\draw (451.74,78.97) node [anchor=north west][inner sep=0.75pt]    {$\Re $};
% Text Node
\draw (372.31,7.88) node [anchor=north west][inner sep=0.75pt]    {$\Im $};
% Text Node
\draw (221.53,79.69) node [anchor=north west][inner sep=0.75pt]    {$\Re $};
% Text Node
\draw (142.1,8.6) node [anchor=north west][inner sep=0.75pt]    {$\Im $};
% Text Node
\draw (452.46,293.3) node [anchor=north west][inner sep=0.75pt]    {$\Re $};
% Text Node
\draw (373.04,222.22) node [anchor=north west][inner sep=0.75pt]    {$\Im $};
% Text Node
\draw (228.99,126.6) node [anchor=north west][inner sep=0.75pt]    {$z\rightarrow t-z$};
% Text Node
\draw (72.56,59.52) node [anchor=north west][inner sep=0.75pt]    {$t$};
% Text Node
\draw (109.16,126.31) node [anchor=north west][inner sep=0.75pt]    {$\xi $};
% Text Node
\draw (368.95,82.14) node [anchor=north west][inner sep=0.75pt]    {$0$};
% Text Node
\draw (165.83,24.38) node [anchor=north west][inner sep=0.75pt]    {$\arg t$};
% Text Node
\draw (51.42,185.64) node [anchor=north west][inner sep=0.75pt]    {$z\rightarrow \frac{1}{z}$};
% Text Node
\draw (53.43,355.31) node [anchor=north west][inner sep=0.75pt]    {$1/t$};
% Text Node
\draw (219.54,292.76) node [anchor=north west][inner sep=0.75pt]    {$\Re $};
% Text Node
\draw (140.66,223.84) node [anchor=north west][inner sep=0.75pt]    {$\Im $};
% Text Node
\draw (164,338.48) node [anchor=north west][inner sep=0.75pt]    {$-\arg t$};
% Text Node
\draw (394.41,238.46) node [anchor=north west][inner sep=0.75pt]    {$\xi \leqslant \frac{\pi }{4}$};
% Text Node
\draw (229.05,246.66) node [anchor=north west][inner sep=0.75pt]    {$z\rightarrow ze^{i\arg t}$};
% Text Node
\draw (407.43,32.78) node [anchor=north west][inner sep=0.75pt]    {$|z|< r$};
% Text Node
\draw (669.87,189.92) node [anchor=north west][inner sep=0.75pt]    {$\Re $};
% Text Node
\draw (590.44,118.84) node [anchor=north west][inner sep=0.75pt]    {$\Im $};
% Text Node
\draw (587.08,193.1) node [anchor=north west][inner sep=0.75pt]    {$0$};
% Text Node
\draw (140,95.67) node [anchor=north west][inner sep=0.75pt]    {$0$};
% Text Node
\draw (137.84,296.48) node [anchor=north west][inner sep=0.75pt]    {$0$};
% Text Node
\draw (358.67,310.55) node [anchor=north west][inner sep=0.75pt]    {$0$};
% Text Node
\draw (475.67,238.58) node [anchor=north west][inner sep=0.75pt]    {$z\rightarrow z^{2}$};
% Text Node
\draw (475.32,102.3) node [anchor=north west][inner sep=0.75pt]    {$z\rightarrow \frac{e^{2i\arg t}}{( z-t)^{2}}$};

\end{tikzpicture}

Now the sign of $\displaystyle \Re $ or $\displaystyle \Im $ of the functions is clear. $\displaystyle F_{1}( z) ,\ F_{2}( z) ,\ F_{3}( z) ,\ F_{4}( z)$:
\begin{equation*}
\Re F_{1}( z) =\sum _{ \begin{array}{l}
|t_{n} | >r\sqrt{2}\\
\Re \left( c_{n} e^{-2i\arg t_{n}}\right)  >0
\end{array}}\underbrace{\Re \left( c_{n} e^{-2i\arg t_{n}}\right)}_{ >0}\underbrace{\Re \left(\frac{e^{2i\arg t_{n}}}{( z-t_{n})^{2}}\right)}_{ >0}  >0,
\end{equation*}
\begin{equation*}
\Re F_{2}( z) =\sum _{ \begin{array}{l}
|t_{n} | >r\sqrt{2}\\
\Re \left( c_{n} e^{-2i\arg t_{n}}\right) < 0
\end{array}}\underbrace{\Re \left( c_{n} e^{-2i\arg t_{n}}\right)}_{< 0}\underbrace{\Re \left(\frac{e^{2i\arg t_{n}}}{( z-t_{n})^{2}}\right)}_{ >0} < 0,
\end{equation*}
\begin{equation*}
\Im F_{3}( z) =\sum _{ \begin{array}{l}
|t_{n} | >r\sqrt{2}\\
\Im \left( c_{n} e^{-2i\arg t_{n}}\right)  >0
\end{array}}\underbrace{\Im \left( c_{n} e^{-2i\arg t_{n}}\right)}_{ >0}\underbrace{\Re \left(\frac{e^{2i\arg t_{n}}}{( z-t_{n})^{2}}\right)}_{ >0}  >0,
\end{equation*}
\begin{equation*}
\Im F_{4}( z) =\sum _{ \begin{array}{l}
|t_{n} | >r\sqrt{2}\\
\Im \left( c_{n} e^{-2i\arg t_{n}}\right) < 0
\end{array}}\underbrace{\Im \left( c_{n} e^{-2i\arg t_{n}}\right)}_{< 0}\underbrace{\Re \left(\frac{e^{2i\arg t_{n}}}{( z-t_{n})^{2}}\right)}_{ >0} < 0.
\end{equation*}
Therefore, by Smirnov's lemma \ref{lem:smirnova}, the estimate holds.
\begin{gather*}
\int\limits _{0}^{2\pi }\left| F_{1}\left( re^{i\varphi }\right)\right| ^{p} d\varphi \leqslant \frac{2\pi }{\cos\frac{\pi p}{2}} |F_{1}( 0) |^{p} \leqslant \frac{2\pi }{\cos\frac{\pi p}{2}}\left(\sum _{ \begin{array}{l}
|t_{n} | >r\sqrt{2}\\
\Re \left( c_{n} e^{-2i\arg t_{n}}\right)  >0
\end{array}}\frac{\left| e^{2i\arg t_{n}}\right| \cdot \overbrace{\left| \Re \left( c_{n} e^{-2i\arg t_{n}}\right)\right| }^{\leqslant |c_{n} |}}{|t_{n} |^{2}}\right)^{p} \leqslant \\
\leqslant \frac{2\pi }{\cos\frac{\pi p}{2}}\left(\sum _{ \begin{array}{l}
|t_{n} | >r\sqrt{2}\\
\Re \left( c_{n} e^{-2i\arg t_{n}}\right)  >0
\end{array}}\frac{|c_{n} |}{|t_{n} |^{2}}\right)^{p} \leqslant \frac{2\pi }{\cos\frac{\pi p}{2}}\left(\sum _{|t_{n} | >r\sqrt{2}}\frac{|c_{n} |}{|t_{n} |^{2}}\right)^{p}.
\end{gather*}
Similarly, we obtain estimates.
\begin{equation*}
\int\limits _{0}^{2\pi }\left| F_{j}\left( re^{i\varphi }\right)\right| ^{p} d\varphi \leqslant \frac{2\pi }{\cos\frac{\pi p}{2}}\left(\sum _{|t_{n} | >r\sqrt{2}}\frac{|c_{n} |}{|t_{n} |^{2}}\right)^{p} ,\ j=2,3,4.
\end{equation*}
Now, let's estimate the entire integral using the inequality $\displaystyle |x+y|^{p} \leqslant |x|^{p} +|y|^{p}$ при $\displaystyle 0< p< 1$:
\begin{gather*}
\int\limits _{0}^{2\pi }\left| \sum _{|t_{n} | >r\sqrt{2}}\frac{c_{n}}{\left( re^{i\varphi } -t_{n}\right)^{2}}\right| ^{p} d\varphi =\int\limits _{0}^{2\pi }\left| \sum _{j=1}^{4} F_{j}\left( re^{i\varphi }\right)\right| ^{p} d\varphi \leqslant \\
\leqslant \sum _{j=1}^{4}\int\limits _{0}^{2\pi }\left| F_{j}\left( re^{i\varphi }\right)\right| ^{p} d\varphi \leqslant \frac{8\pi }{\cos\frac{\pi p}{2}}\left(\underbrace{\sum\limits _{|t_{n} | >r\sqrt{2}}\frac{|c_{n} |}{|t_{n} |^{2}}}_{=o( 1)}\right)^{p}\xrightarrow[r\rightarrow \infty ]{} 0.
\end{gather*}
\end{proof}
Now, in a similar manner, let's estimate the integral from the beginning of the series.

\begin{lema}[The beginning of the series]\label{lem:start}
Let $\displaystyle \{c_{n}\}_{n\in \mathbb{N}} ,\{t_{n}\}_{n\in \mathbb{N}} \subset \mathbb{C}$ such that $\displaystyle \lim _{n\rightarrow \infty } |t_{n} |=\infty $ and $\displaystyle \{t_{n}\}_{n\in \mathbb{N}}$ has no finite limit points, and
\begin{equation*}
\sum _{n=1}^{\infty }\frac{|c_{n} |}{|t_{n} |^{2}} < +\infty \ -\ \text{the natural condition}.
\end{equation*}
Then for each $\displaystyle 0< p< 1$ 
\begin{equation*}
\int\limits _{0}^{2\pi }\left| \sum _{|t_{n} |< \frac{r}{\sqrt{2}}}\frac{c_{n}}{\left( re^{i\varphi } -t_{n}\right)^{2}}\right| ^{p} d\varphi \leqslant \frac{8\pi }{\cos\frac{\pi p}{2}}\left(\frac{1}{r^{2}}\sum\limits _{|t_{n} |< \frac{r}{\sqrt{2}}} |c_{n} |\right)^{p}\xrightarrow[r\rightarrow \infty ]{} 0.
\end{equation*}
\end{lema}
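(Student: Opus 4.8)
The plan is to mirror the proof of Lemma~\ref{lem:tail}, the only structural difference being that the poles $t_n$ now lie \emph{inside} the circle $|z|=r$, so the relevant functions are analytic in the exterior of a disk rather than inside one. I would remove this obstruction by the inversion $z=r/\zeta$, which carries $\{|z|>r/\sqrt2\}$ onto $\{|\zeta|<\sqrt2\}$ and the circle $|z|=r$ onto $|\zeta|=1$. A short computation gives
\[
\sum_{|t_n|<r/\sqrt2}\frac{c_n}{(r/\zeta-t_n)^2}=\frac{\zeta^2}{r^2}\,H(\zeta),\qquad H(\zeta):=\sum_{|t_n|<r/\sqrt2}\frac{c_n}{\bigl(1-\tfrac{t_n}{r}\zeta\bigr)^2},
\]
a \emph{finite} sum because $|t_n|\to\infty$. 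Extracting the factor $\zeta^2/r^2$ is the crucial move: it makes $H$ \emph{non-vanishing} at the centre of the disk, with $H(0)=\sum_{|t_n|<r/\sqrt2}c_n$ — exactly the quantity that is to appear on the right-hand side. Since the poles of $H$ satisfy $|\zeta|=r/|t_n|>\sqrt2$, each of the functions below is analytic on the closed unit disk.

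Next I would verify the sign condition, the exact analogue of the conformal-map computation in Lemma~\ref{lem:tail}. For $|\zeta|<1$ and $|t_n|<r/\sqrt2$ we have $\bigl|\tfrac{t_n}{r}\zeta\bigr|<1/\sqrt2$, so $1-\tfrac{t_n}{r}\zeta$ lies in the open disk of radius $1/\sqrt2$ about $1$; that disk avoids the origin, and the tangent lines to it from the origin make angle $\arcsin(1/\sqrt2)=\pi/4$ with the positive real axis, hence $\bigl|\arg\bigl(1-\tfrac{t_n}{r}\zeta\bigr)\bigr|<\pi/4$ and therefore $\Re\bigl(1-\tfrac{t_n}{r}\zeta\bigr)^{-2}>0$ on $|\zeta|<1$. (This is where the margin $\sqrt2$ is spent, both for the sign and to keep the poles off the closed unit disk.) Writing $c_n=\Re c_n+i\,\Im c_n$ and grouping the terms of $H$ by the sign of $\Re c_n$ and of $\Im c_n$ expresses $H=H_1+H_2+H_3+H_4$ with each $H_j$ analytic in $|\zeta|<\sqrt2$ and, on $|\zeta|<1$, $\Re H_1>0$, $\Re H_2<0$, $\Im H_3>0$, $\Im H_4<0$, since in every sum a sign-definite coefficient $\Re c_n$ or $\Im c_n$ multiplies the positive quantity $\Re\bigl(1-\tfrac{t_n}{r}\zeta\bigr)^{-2}$; note that, unlike in Lemma~\ref{lem:tail}, no rotation of the $c_n$ is needed here.

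Now Smirnov's Lemma~\ref{lem:smirnova}, applied to each $H_j$ on the disk of radius $1$ (the $\varliminf$ there is just the boundary value $|H_j(e^{i\psi})|^p$, as $H_j$ is analytic on the closed disk), gives $\int_0^{2\pi}|H_j(e^{i\psi})|^p\,d\psi\le\frac{2\pi}{\cos(\pi p/2)}\,|H_j(0)|^p$, and clearly $|H_j(0)|\le\sum_{|t_n|<r/\sqrt2}|c_n|$ for $j=1,2,3,4$. Because $|g(re^{i\varphi})|=r^{-2}\,|H(e^{-i\varphi})|$, where $g$ denotes the sum in the statement, the substitution $\psi=-\varphi$ together with $\bigl|\sum_{j=1}^4 a_j\bigr|^p\le\sum_{j=1}^4|a_j|^p$ (valid for $0<p<1$) yields
\[
\int_0^{2\pi}\Bigl|\sum_{|t_n|<r/\sqrt2}\frac{c_n}{(re^{i\varphi}-t_n)^2}\Bigr|^p d\varphi=\frac{1}{r^{2p}}\int_0^{2\pi}|H(e^{i\psi})|^p\,d\psi\le\frac{8\pi}{\cos(\pi p/2)}\Bigl(\frac1{r^2}\sum_{|t_n|<r/\sqrt2}|c_n|\Bigr)^p,
\]
which is the asserted bound. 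For the convergence to $0$ I would invoke a Kronecker-type estimate: with $S:=\sum_n|c_n|/|t_n|^2<\infty$, splitting the sum at $|t_n|=\varepsilon r$ gives $\frac1{r^2}\sum_{|t_n|<r/\sqrt2}|c_n|\le\varepsilon^2 S+\sum_{|t_n|>\varepsilon r}|c_n|/|t_n|^2$, and letting $r\to\infty$ and then $\varepsilon\to0$ forces the limit to be $0$.

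The one step I expect to be non-routine is the very first: one must extract the factor $z^{-2}$ (i.e.\ $\zeta^2/r^2$ after inversion) \emph{before} decomposing, since otherwise the inverted function vanishes at the centre $\zeta=0$ and Smirnov's lemma returns only the useless bound $0$. Once $H$ is normalised so that $H(0)=\sum c_n\neq0$, the sign verification, the application of Smirnov's lemma and the recombination run line-for-line parallel to the proof of Lemma~\ref{lem:tail}.
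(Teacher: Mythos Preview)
Your proof is correct and follows the same strategy as the paper: move the poles outside the integration disk, split into four pieces with definite sign of $\Re$ or $\Im$ using $c_n=\Re c_n+i\Im c_n$, apply Smirnov's lemma to each, and finish with the same Kronecker-type tail argument. The only cosmetic difference is the mechanism for relocating the poles: the paper uses the conjugation identity $|g(z)|=|\overline{g(z)}|$ together with $\bar z=r^2/z$ on $|z|=r$ to replace the sum by the analytic function $\sum\overline{c_n}\,r^2/(r^2-z\overline{t_n})^2$ on the same disk $|z|<r$, whereas you perform the holomorphic inversion $z=r/\zeta$ and work on the unit disk in $\zeta$; after that step the two arguments (including the $\pi/4$ sign verification and the four-fold decomposition) are line-for-line parallel.
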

\begin{proof}
The function $\displaystyle \sum _{|t_{n} |< \frac{r}{\sqrt{2}}}\frac{c_{n}}{( z-t_{n})^{2}}$ has poles inside the circle $\displaystyle |z|<r$, which prevents us from directly applying the estimation technique via Smirnov's lemma. However, on the integration contour $\displaystyle |z|=r$, we can modify the function while preserving its magnitude to reflect the poles symmetrically outside the circle $\displaystyle |z|< r$. Specifically,
\begin{equation*}
\left| \sum _{|t_{n} |< \frac{r}{\sqrt{2}}}\frac{c_{n}}{( z-t_{n})^{2}}\right| =\left| \overline{\sum\limits _{|t_{n} |< \frac{r}{\sqrt{2}}}\frac{c_{n}}{( z-t_{n})^{2}}}\right| =\left| \sum _{|t_{n} |< \frac{r}{\sqrt{2}}}\frac{\overline{c_{n}}}{\left(\overline{z} -\overline{t_{n}}\right)^{2}}\right| =\left| \sum _{|t_{n} |< \frac{r}{\sqrt{2}}}\frac{\overline{c_{n}}}{\left(\overline{z} z-z\overline{t_{n}}\right)^{2}}\right| =\left| \sum _{|t_{n} |< \frac{r}{\sqrt{2}}}\frac{\overline{c_{n}} r^{2}}{\left( r^{2} -z\overline{t_{n}}\right)^{2}}\right|.
\end{equation*}
The new function $\displaystyle \sum _{|t_{n} |< \frac{r}{\sqrt{2}}}\frac{\overline{c_{n}} r^{2}}{\left( r^{2} -z\overline{t_{n}}\right)^{2}}$ already has poles at $\displaystyle \frac{r^{2}}{\overline{t_{n}}}$, which are now outside the circle $\displaystyle |z|< r$. Now let's transform it as before:
\begin{gather*}
\sum _{|t_{n} |< \frac{r}{\sqrt{2}}}\frac{\overline{c_{n}} r^{2}}{\left( r^{2} -z\overline{t_{n}}\right)^{2}} =\sum _{|t_{n} |< \frac{r}{\sqrt{2}}}\frac{r^{2} \Re c_{n}}{\left( r^{2} -z\overline{t_{n}}\right)^{2}} -i\sum _{|t_{n} |< \frac{r}{\sqrt{2}}}\frac{r^{2} \Im c_{n}}{\left( r^{2} -z\overline{t_{n}}\right)^{2}} =\\
=\sum _{ \begin{array}{l}
|t_{n} |< \frac{r}{\sqrt{2}}\\
\Re c_{n}  >0
\end{array}}\frac{r^{2} \Re c_{n}}{\left( r^{2} -z\overline{t_{n}}\right)^{2}} +\sum _{ \begin{array}{l}
|t_{n} |< \frac{r}{\sqrt{2}}\\
\Re c_{n} < 0
\end{array}}\frac{r^{2} \Re c_{n}}{\left( r^{2} -z\overline{t_{n}}\right)^{2}} -i\sum _{ \begin{array}{l}
|t_{n} |< \frac{r}{\sqrt{2}}\\
\Im c_{n}  >0
\end{array}}\frac{r^{2} \Im c_{n}}{\left( r^{2} -z\overline{t_{n}}\right)^{2}} -i\sum _{ \begin{array}{l}
|t_{n} |< \frac{r}{\sqrt{2}}\\
\Im c_{n} < 0
\end{array}}\frac{r^{2} \Im c_{n}}{\left( r^{2} -z\overline{t_{n}}\right)^{2}}.
\end{gather*}
Let's denote these four sums as $\displaystyle G_{1}( z)$, $\displaystyle G_{2}( z)$, $\displaystyle G_{3}( z)$, $\displaystyle G_{4}( z)$. Then these functions are analytic in the disk $\displaystyle |z|< r$. Now, to apply Smirnov's lemma, we need to examine the sign of $\displaystyle \Re $ or $\displaystyle \Im $. To do this, let's verify that
\begin{equation*}
\Re \left(\frac{1}{\left( r^{2} -z\overline{t}\right)^{2}}\right)  >0\ \text{in the disc} \ |z|< r,\ |t|< \frac{r}{\sqrt{2}}.
\end{equation*}
In my opinion, the simplest way to understand this is to look at the conformal map. Note that
\begin{equation*}
\frac{1}{\left( r^{2} -z\overline{t}\right)^{2}} =z^{2} \circ \frac{1}{z} \circ \left( r^{2} -z\right) \circ ( z\overline{t}).
\end{equation*}
Since according to the condition $\displaystyle |t |< \frac{r}{\sqrt{2}}$, then $\displaystyle \sin \xi =\frac{r|t|}{r^{2}} =\frac{|t|}{r} < \frac{1}{\sqrt{2}}$, hence $\displaystyle 0< \xi < \frac{\pi }{4}$. That's why we'll obtain $\displaystyle \Re  >0$ as a result.

\tikzset{every picture/.style={line width=0.75pt}} %set default line width to 0.75pt        

\begin{tikzpicture}[x=0.75pt,y=0.75pt,yscale=-1,xscale=1]
%uncomment if require: \path (0,419); %set diagram left start at 0, and has height of 419

%Shape: Ellipse [id:dp058436163066393654] 
\draw  [fill={rgb, 255:red, 80; green, 227; blue, 194 }  ,fill opacity=1 ] (149.52,312.99) .. controls (149.52,296.31) and (163.04,282.79) .. (179.72,282.79) .. controls (196.4,282.79) and (209.92,296.31) .. (209.92,312.99) .. controls (209.92,329.67) and (196.4,343.19) .. (179.72,343.19) .. controls (163.04,343.19) and (149.52,329.67) .. (149.52,312.99) -- cycle ;
%Shape: Polygon Curved [id:ds22875057833878798] 
\draw  [fill={rgb, 255:red, 80; green, 227; blue, 194 }  ,fill opacity=1 ] (591.85,159.45) .. controls (603.27,135.63) and (616.65,151.93) .. (632.97,162.05) .. controls (649.29,172.16) and (673.12,187.18) .. (654.19,223.4) .. controls (635.26,259.63) and (591.64,258.01) .. (589.69,239.08) .. controls (587.73,220.15) and (606.54,227.32) .. (610.13,204.96) .. controls (613.72,182.61) and (580.43,183.28) .. (591.85,159.45) -- cycle ;
%Shape: Ellipse [id:dp8697446666941171] 
\draw  [fill={rgb, 255:red, 80; green, 227; blue, 194 }  ,fill opacity=1 ] (380.41,311.89) .. controls (380.41,298.58) and (391.2,287.79) .. (404.52,287.79) .. controls (417.83,287.79) and (428.62,298.58) .. (428.62,311.89) .. controls (428.62,325.21) and (417.83,336) .. (404.52,336) .. controls (391.2,336) and (380.41,325.21) .. (380.41,311.89) -- cycle ;
%Shape: Ellipse [id:dp4375866092899141] 
\draw  [fill={rgb, 255:red, 80; green, 227; blue, 194 }  ,fill opacity=1 ] (108.02,97.99) .. controls (108.02,81.31) and (121.54,67.79) .. (138.22,67.79) .. controls (154.9,67.79) and (168.42,81.31) .. (168.42,97.99) .. controls (168.42,114.67) and (154.9,128.19) .. (138.22,128.19) .. controls (121.54,128.19) and (108.02,114.67) .. (108.02,97.99) -- cycle ;
%Shape: Ellipse [id:dp9333700750981655] 
\draw  [fill={rgb, 255:red, 80; green, 227; blue, 194 }  ,fill opacity=1 ] (345.13,97.27) .. controls (345.13,84.4) and (355.56,73.97) .. (368.43,73.97) .. controls (381.3,73.97) and (391.73,84.4) .. (391.73,97.27) .. controls (391.73,110.14) and (381.3,120.57) .. (368.43,120.57) .. controls (355.56,120.57) and (345.13,110.14) .. (345.13,97.27) -- cycle ;
%Straight Lines [id:da602096321451036] 
\draw    (272.99,97.27) -- (461.87,97.27) ;
\draw [shift={(463.87,97.27)}, rotate = 180] [color={rgb, 255:red, 0; green, 0; blue, 0 }  ][line width=0.75]    (10.93,-3.29) .. controls (6.95,-1.4) and (3.31,-0.3) .. (0,0) .. controls (3.31,0.3) and (6.95,1.4) .. (10.93,3.29)   ;
%Straight Lines [id:da7993056440261228] 
\draw    (368.43,181.89) -- (368.43,14.65) ;
\draw [shift={(368.43,12.65)}, rotate = 90] [color={rgb, 255:red, 0; green, 0; blue, 0 }  ][line width=0.75]    (10.93,-3.29) .. controls (6.95,-1.4) and (3.31,-0.3) .. (0,0) .. controls (3.31,0.3) and (6.95,1.4) .. (10.93,3.29)   ;
%Straight Lines [id:da7786046946116527] 
\draw    (42.78,97.99) -- (231.66,97.99) ;
\draw [shift={(233.66,97.99)}, rotate = 180] [color={rgb, 255:red, 0; green, 0; blue, 0 }  ][line width=0.75]    (10.93,-3.29) .. controls (6.95,-1.4) and (3.31,-0.3) .. (0,0) .. controls (3.31,0.3) and (6.95,1.4) .. (10.93,3.29)   ;
%Straight Lines [id:da8516157650420846] 
\draw    (138.22,182.61) -- (138.22,15.37) ;
\draw [shift={(138.22,13.37)}, rotate = 90] [color={rgb, 255:red, 0; green, 0; blue, 0 }  ][line width=0.75]    (10.93,-3.29) .. controls (6.95,-1.4) and (3.31,-0.3) .. (0,0) .. controls (3.31,0.3) and (6.95,1.4) .. (10.93,3.29)   ;
%Straight Lines [id:da521256875306386] 
\draw    (273.71,311.61) -- (462.6,311.61) ;
\draw [shift={(464.6,311.61)}, rotate = 180] [color={rgb, 255:red, 0; green, 0; blue, 0 }  ][line width=0.75]    (10.93,-3.29) .. controls (6.95,-1.4) and (3.31,-0.3) .. (0,0) .. controls (3.31,0.3) and (6.95,1.4) .. (10.93,3.29)   ;
%Straight Lines [id:da3482047348838293] 
\draw    (369.15,396.22) -- (369.15,228.99) ;
\draw [shift={(369.15,226.99)}, rotate = 90] [color={rgb, 255:red, 0; green, 0; blue, 0 }  ][line width=0.75]    (10.93,-3.29) .. controls (6.95,-1.4) and (3.31,-0.3) .. (0,0) .. controls (3.31,0.3) and (6.95,1.4) .. (10.93,3.29)   ;
%Straight Lines [id:da0787547568248681] 
\draw [color={rgb, 255:red, 0; green, 0; blue, 0 }  ,draw opacity=1 ][line width=1.5]    (306.19,148.87) -- (204.91,148.87) ;
\draw [shift={(201.91,148.87)}, rotate = 360] [color={rgb, 255:red, 0; green, 0; blue, 0 }  ,draw opacity=1 ][line width=1.5]    (14.21,-4.28) .. controls (9.04,-1.82) and (4.3,-0.39) .. (0,0) .. controls (4.3,0.39) and (9.04,1.82) .. (14.21,4.28)   ;
%Straight Lines [id:da19271806918005274] 
\draw [color={rgb, 255:red, 0; green, 0; blue, 0 }  ,draw opacity=1 ][line width=1.5]    (98.02,170.84) -- (98.02,252.91) ;
\draw [shift={(98.02,255.91)}, rotate = 270] [color={rgb, 255:red, 0; green, 0; blue, 0 }  ,draw opacity=1 ][line width=1.5]    (14.21,-4.28) .. controls (9.04,-1.82) and (4.3,-0.39) .. (0,0) .. controls (4.3,0.39) and (9.04,1.82) .. (14.21,4.28)   ;
%Straight Lines [id:da72376327187449] 
\draw    (41.33,313.23) -- (230.22,313.23) ;
\draw [shift={(232.22,313.23)}, rotate = 180] [color={rgb, 255:red, 0; green, 0; blue, 0 }  ][line width=0.75]    (10.93,-3.29) .. controls (6.95,-1.4) and (3.31,-0.3) .. (0,0) .. controls (3.31,0.3) and (6.95,1.4) .. (10.93,3.29)   ;
%Straight Lines [id:da4978086262171362] 
\draw    (136.77,397.85) -- (136.77,230.61) ;
\draw [shift={(136.77,228.61)}, rotate = 90] [color={rgb, 255:red, 0; green, 0; blue, 0 }  ][line width=0.75]    (10.93,-3.29) .. controls (6.95,-1.4) and (3.31,-0.3) .. (0,0) .. controls (3.31,0.3) and (6.95,1.4) .. (10.93,3.29)   ;
%Straight Lines [id:da7808270145359608] 
\draw    (404.52,311.89) -- (389.25,293.64) ;
%Shape: Rectangle [id:dp8687824367371497] 
\draw   (388.75,293.57) -- (391.65,296.78) -- (388.44,299.68) -- (385.53,296.47) -- cycle ;
%Straight Lines [id:da6804221532018493] 
\draw    (388.81,293.57) -- (369.15,311.61) ;
%Shape: Arc [id:dp8710062124934599] 
\draw  [draw opacity=0] (375.33,305.59) .. controls (376.84,307.14) and (377.71,309.21) .. (377.78,311.37) -- (369.15,311.61) -- cycle ; \draw  [color={rgb, 255:red, 208; green, 2; blue, 27 }  ,draw opacity=1 ] (375.33,305.59) .. controls (376.84,307.14) and (377.71,309.21) .. (377.78,311.37) ;  
%Straight Lines [id:da03487389914358063] 
\draw [color={rgb, 255:red, 208; green, 2; blue, 27 }  ,draw opacity=1 ]   (392.14,264.19) -- (376.08,303.73) ;
\draw [shift={(375.33,305.59)}, rotate = 292.1] [color={rgb, 255:red, 208; green, 2; blue, 27 }  ,draw opacity=1 ][line width=0.75]    (10.93,-3.29) .. controls (6.95,-1.4) and (3.31,-0.3) .. (0,0) .. controls (3.31,0.3) and (6.95,1.4) .. (10.93,3.29)   ;
%Straight Lines [id:da8993973956271386] 
\draw [color={rgb, 255:red, 0; green, 0; blue, 0 }  ,draw opacity=1 ][line width=1.5]    (316.09,268.81) -- (214.81,268.81) ;
\draw [shift={(319.09,268.81)}, rotate = 180] [color={rgb, 255:red, 0; green, 0; blue, 0 }  ,draw opacity=1 ][line width=1.5]    (14.21,-4.28) .. controls (9.04,-1.82) and (4.3,-0.39) .. (0,0) .. controls (4.3,0.39) and (9.04,1.82) .. (14.21,4.28)   ;
%Straight Lines [id:da6825954022014233] 
\draw    (419.85,52.89) -- (391.62,76.24) ;
\draw [shift={(390.08,77.51)}, rotate = 320.4] [color={rgb, 255:red, 0; green, 0; blue, 0 }  ][line width=0.75]    (10.93,-3.29) .. controls (6.95,-1.4) and (3.31,-0.3) .. (0,0) .. controls (3.31,0.3) and (6.95,1.4) .. (10.93,3.29)   ;
%Straight Lines [id:da5238973864443115] 
\draw    (491.12,208.23) -- (680,208.23) ;
\draw [shift={(682,208.23)}, rotate = 180] [color={rgb, 255:red, 0; green, 0; blue, 0 }  ][line width=0.75]    (10.93,-3.29) .. controls (6.95,-1.4) and (3.31,-0.3) .. (0,0) .. controls (3.31,0.3) and (6.95,1.4) .. (10.93,3.29)   ;
%Straight Lines [id:da04016424850954903] 
\draw    (586.56,292.84) -- (586.56,125.61) ;
\draw [shift={(586.56,123.61)}, rotate = 90] [color={rgb, 255:red, 0; green, 0; blue, 0 }  ][line width=0.75]    (10.93,-3.29) .. controls (6.95,-1.4) and (3.31,-0.3) .. (0,0) .. controls (3.31,0.3) and (6.95,1.4) .. (10.93,3.29)   ;
%Straight Lines [id:da022194330567507414] 
\draw [color={rgb, 255:red, 0; green, 0; blue, 0 }  ,draw opacity=1 ][line width=1.5]    (477.22,284.18) -- (550.38,243.51) ;
\draw [shift={(553,242.05)}, rotate = 150.93] [color={rgb, 255:red, 0; green, 0; blue, 0 }  ,draw opacity=1 ][line width=1.5]    (14.21,-4.28) .. controls (9.04,-1.82) and (4.3,-0.39) .. (0,0) .. controls (4.3,0.39) and (9.04,1.82) .. (14.21,4.28)   ;
%Straight Lines [id:da2943846307361906] 
\draw [color={rgb, 255:red, 189; green, 16; blue, 224 }  ,draw opacity=1 ][line width=1.5]    (434.47,141.83) -- (541.47,174.6) ;
\draw [shift={(544.34,175.48)}, rotate = 197.03] [color={rgb, 255:red, 189; green, 16; blue, 224 }  ,draw opacity=1 ][line width=1.5]    (14.21,-4.28) .. controls (9.04,-1.82) and (4.3,-0.39) .. (0,0) .. controls (4.3,0.39) and (9.04,1.82) .. (14.21,4.28)   ;
%Straight Lines [id:da9542292516986586] 
\draw    (179.72,312.99) -- (160.2,289.5) ;
%Straight Lines [id:da8701065615520802] 
\draw    (160.2,289.5) -- (136.77,313.23) ;
%Shape: Rectangle [id:dp8432630348976202] 
\draw   (160.35,290.57) -- (163.25,293.78) -- (160.04,296.68) -- (157.13,293.47) -- cycle ;
%Straight Lines [id:da741006002816712] 
\draw [color={rgb, 255:red, 208; green, 2; blue, 27 }  ,draw opacity=1 ]   (163.73,250.79) -- (147.64,298.41) ;
\draw [shift={(147,300.3)}, rotate = 288.67] [color={rgb, 255:red, 208; green, 2; blue, 27 }  ,draw opacity=1 ][line width=0.75]    (10.93,-3.29) .. controls (6.95,-1.4) and (3.31,-0.3) .. (0,0) .. controls (3.31,0.3) and (6.95,1.4) .. (10.93,3.29)   ;
%Shape: Arc [id:dp03830415412215671] 
\draw  [draw opacity=0] (142.95,307.21) .. controls (144.46,308.76) and (145.34,310.83) .. (145.4,312.99) -- (136.77,313.23) -- cycle ; \draw  [color={rgb, 255:red, 208; green, 2; blue, 27 }  ,draw opacity=1 ] (142.95,307.21) .. controls (144.46,308.76) and (145.34,310.83) .. (145.4,312.99) ;  
%Straight Lines [id:da36657225240370406] 
\draw    (188.35,48.89) -- (147.04,83.08) ;
\draw [shift={(145.5,84.35)}, rotate = 320.39] [color={rgb, 255:red, 0; green, 0; blue, 0 }  ][line width=0.75]    (10.93,-3.29) .. controls (6.95,-1.4) and (3.31,-0.3) .. (0,0) .. controls (3.31,0.3) and (6.95,1.4) .. (10.93,3.29)   ;
%Straight Lines [id:da46312704103486246] 
\draw    (178.5,371.35) -- (181.36,330.35) ;
\draw [shift={(181.5,328.35)}, rotate = 93.99] [color={rgb, 255:red, 0; green, 0; blue, 0 }  ][line width=0.75]    (10.93,-3.29) .. controls (6.95,-1.4) and (3.31,-0.3) .. (0,0) .. controls (3.31,0.3) and (6.95,1.4) .. (10.93,3.29)   ;

% Text Node
\draw (451.74,78.97) node [anchor=north west][inner sep=0.75pt]    {$\Re $};
% Text Node
\draw (372.31,7.88) node [anchor=north west][inner sep=0.75pt]    {$\Im $};
% Text Node
\draw (221.53,79.69) node [anchor=north west][inner sep=0.75pt]    {$\Re $};
% Text Node
\draw (142.1,8.6) node [anchor=north west][inner sep=0.75pt]    {$\Im $};
% Text Node
\draw (452.46,293.3) node [anchor=north west][inner sep=0.75pt]    {$\Re $};
% Text Node
\draw (373.04,222.22) node [anchor=north west][inner sep=0.75pt]    {$\Im $};
% Text Node
\draw (228.99,126.6) node [anchor=north west][inner sep=0.75pt]    {$z\rightarrow -z\overline{t}$};
% Text Node
\draw (368.95,82.14) node [anchor=north west][inner sep=0.75pt]    {$0$};
% Text Node
\draw (33.42,192.64) node [anchor=north west][inner sep=0.75pt]    {$z\rightarrow z+r^{2}$};
% Text Node
\draw (219.54,292.76) node [anchor=north west][inner sep=0.75pt]    {$\Re $};
% Text Node
\draw (140.66,223.84) node [anchor=north west][inner sep=0.75pt]    {$\Im $};
% Text Node
\draw (394.41,238.46) node [anchor=north west][inner sep=0.75pt]    {$\xi \leqslant \frac{\pi }{4}$};
% Text Node
\draw (243.14,224.41) node [anchor=north west][inner sep=0.75pt]    {$z\rightarrow \frac{1}{z}$};
% Text Node
\draw (407.43,32.78) node [anchor=north west][inner sep=0.75pt]    {$|z|< r$};
% Text Node
\draw (669.87,189.92) node [anchor=north west][inner sep=0.75pt]    {$\Re $};
% Text Node
\draw (590.44,118.84) node [anchor=north west][inner sep=0.75pt]    {$\Im $};
% Text Node
\draw (587.08,193.1) node [anchor=north west][inner sep=0.75pt]    {$0$};
% Text Node
\draw (140,95.67) node [anchor=north west][inner sep=0.75pt]    {$0$};
% Text Node
\draw (126.64,299.28) node [anchor=north west][inner sep=0.75pt]    {$0$};
% Text Node
\draw (358.67,310.55) node [anchor=north west][inner sep=0.75pt]    {$0$};
% Text Node
\draw (475.67,238.58) node [anchor=north west][inner sep=0.75pt]    {$z\rightarrow z^{2}$};
% Text Node
\draw (475.32,102.3) node [anchor=north west][inner sep=0.75pt]    {$z\rightarrow \frac{e^{2i\arg t}}{( z-t)^{2}}$};
% Text Node
\draw (166,225.06) node [anchor=north west][inner sep=0.75pt]    {$\xi \leqslant \frac{\pi }{4}$};
% Text Node
\draw (147,370.4) node [anchor=north west][inner sep=0.75pt]    {$|z-r^{2} |< r|t|$};
% Text Node
\draw (167,29) node [anchor=north west][inner sep=0.75pt]    {$|z|< r|t|$};

\end{tikzpicture}

Now the sign of $\displaystyle \Re $ or $\displaystyle \Im $ of the functions is clear. $\displaystyle G_{1}( z) ,\ G_{2}( z) ,\ G_{3}( z) ,\ G_{4}( z)$:
\begin{equation*}
\Re G_{1}( z) =\sum _{ \begin{array}{l}
|t_{n} |< \frac{r}{\sqrt{2}}\\
\Re c_{n}  >0
\end{array}}\underbrace{r^{2} \Re c_{n}}_{ >0}\underbrace{\Re \left(\frac{1}{\left( r^{2} -z\overline{t_{n}}\right)^{2}}\right)}_{ >0}  >0,
\end{equation*}
\begin{equation*}
\Re G_{2}( z) =\sum _{ \begin{array}{l}
|t_{n} |< \frac{r}{\sqrt{2}}\\
\Re c_{n} < 0
\end{array}}\underbrace{r^{2} \Re c_{n}}_{< 0}\underbrace{\Re \left(\frac{1}{\left( r^{2} -z\overline{t_{n}}\right)^{2}}\right)}_{ >0} < 0,
\end{equation*}
\begin{equation*}
\Im G_{3}( z) =\sum _{ \begin{array}{l}
|t_{n} |< \frac{r}{\sqrt{2}}\\
\Im c_{n}  >0
\end{array}}\underbrace{r^{2} \Im c_{n}}_{ >0}\underbrace{\Re \left(\frac{1}{\left( r^{2} -z\overline{t_{n}}\right)^{2}}\right)}_{ >0}  >0,
\end{equation*}
\begin{equation*}
\Im G_{4}( z) =\sum _{ \begin{array}{l}
|t_{n} |< \frac{r}{\sqrt{2}}\\
\Im c_{n} < 0
\end{array}}\underbrace{r^{2} \Im c_{n}}_{< 0}\underbrace{\Re \left(\frac{1}{\left( r^{2} -z\overline{t_{n}}\right)^{2}}\right)}_{ >0} < 0.
\end{equation*}
Therefore, by Smirnov's lemma, the estimate holds.
\begin{gather*}
\int\limits _{0}^{2\pi }\left| G_{1}\left( re^{i\varphi }\right)\right| ^{p} d\varphi \leqslant \frac{2\pi }{\cos\frac{\pi p}{2}} |G_{1}( 0) |^{p} \leqslant \frac{2\pi }{\cos\frac{\pi p}{2}}\left| \sum _{ \begin{array}{l}
|t_{n} |< \frac{r}{\sqrt{2}}\\
\Re c_{n}  >0
\end{array}}\frac{\Re c_{n}}{r^{2}}\right| ^{p} \leqslant \\
\leqslant \frac{2\pi }{\cos\frac{\pi p}{2}}\left(\frac{1}{r^{2}}\sum _{ \begin{array}{l}
|t_{n} |< \frac{r}{\sqrt{2}}\\
\Re c_{n}  >0
\end{array}} |\Re c_{n} |\right)^{p} \leqslant \frac{2\pi }{\cos\frac{\pi p}{2}}\left(\frac{1}{r^{2}}\sum _{|t_{n} |< \frac{r}{\sqrt{2}}} |c_{n} |\right)^{p}.
\end{gather*}
Similarly, we obtain estimates
\begin{equation*}
\int\limits _{0}^{2\pi }\left| G_{j}\left( re^{i\varphi }\right)\right| ^{p} d\varphi \leqslant \frac{2\pi }{\cos\frac{\pi p}{2}}\left(\frac{1}{r^{2}}\sum _{|t_{n} |< \frac{r}{\sqrt{2}}} |c_{n} |\right)^{p} ,\ j=2,3,4.
\end{equation*}
Now, let's estimate the entire integral using the inequality $\displaystyle |x+y|^{p} \leqslant |x|^{p} +|y|^{p}$ as $\displaystyle 0< p< 1$:
\begin{gather*}
\int\limits _{0}^{2\pi }\left| \sum _{|t_{n} |< \frac{r}{\sqrt{2}}}\frac{c_{n}}{\left( re^{i\varphi } -t_{n}\right)^{2}}\right| ^{p} d\varphi =\int\limits _{0}^{2\pi }\left| \sum _{|t_{n} |< \frac{r}{\sqrt{2}}}\frac{\overline{c_{n}} r^{2}}{\left( r^{2} -z\overline{t_{n}}\right)^{2}}\right| ^{p} d\varphi =\int\limits _{0}^{2\pi }\left| \sum _{j=1}^{4} G_{j}\left( re^{i\varphi }\right)\right| ^{p} d\varphi \leqslant \\
\leqslant \sum _{j=1}^{4}\int\limits _{0}^{2\pi }\left| G_{j}\left( re^{i\varphi }\right)\right| ^{p} d\varphi \leqslant \frac{8\pi }{\cos\frac{\pi p}{2}}\left(\frac{1}{r^{2}}\sum\limits _{|t_{n} |< \frac{r}{\sqrt{2}}} |c_{n} |\right)^{p}\xrightarrow[r\rightarrow \infty ]{} 0,
\end{gather*}
since the estimate applies
\begin{gather*}
\frac{1}{r^{2}}\sum\limits _{|t_{n} |< \frac{r}{\sqrt{2}}} |c_{n} |=\frac{1}{r^{2}}\sum\limits _{|t_{n} |< \sqrt{r}} |c_{n} |+\frac{1}{r^{2}}\sum\limits _{\sqrt{r} < |t_{n} |< \frac{r}{\sqrt{2}}} |c_{n} |=\frac{1}{r^{2}}\sum\limits _{|t_{n} |< \sqrt{r}}\frac{|c_{n} |}{|t_{n} |^{2}} \cdot \underbrace{|t_{n} |^{2}}_{< r} +\frac{1}{r^{2}}\sum\limits _{\sqrt{r} \leqslant |t_{n} |< \frac{r}{\sqrt{2}}}\frac{|c_{n} |}{|t_{n} |^{2}} \cdot \underbrace{|t_{n} |^{2}}_{< r^{2} /2} < \\
< \frac{1}{r}\sum\limits _{|t_{n} |< \sqrt{r}}\frac{|c_{n} |}{|t_{n} |^{2}} +\frac{1}{2}\sum\limits _{\sqrt{r} \leqslant |t_{n} |< \frac{r}{\sqrt{2}}}\frac{|c_{n} |}{|t_{n} |^{2}} =O\left(\frac{1}{r}\right) +o( 1) =o( 1).  
\end{gather*}
\end{proof}
Now remains the integral from the part of the series where the poles are confined within a ring around the circle over which we are integrating. For this case, I haven't been able to apply the previous technique yet, so I've come up with a new way to obtain an implicit estimate. To do this, let's define an averaging operator on functions of our type.
\begin{definition}
Let's define a linear operator
\begin{equation*}
\mathcal{J}( f) =\frac{f\left(\sqrt{z}\right) -f\left( -\sqrt{z}\right)}{4\sqrt{z}} \ -\ \text{the averaging operator}.
\end{equation*}
Then, for the derivatives of sums of Cauchy kernels
\begin{equation*}
\mathcal{J}\left(\sum _{n=1}^{\infty }\frac{c_{n}}{( z-t_{n})^{2}}\right) =\sum _{n=1}^{\infty }\frac{c_{n} t_{n}}{\left( z-t_{n}^{2}\right)^{2}}.
\end{equation*}
\end{definition}
The averaging operator $\displaystyle \mathcal{J}$ is bounded in some sense.

\begin{lema}[The boundedness of the averaging operator]\label{lem:ogrOpUsredne}
Let $\displaystyle r >0$, $\displaystyle 0< p< 1$, $\displaystyle f:\mathbb{C}\rightarrow \mathbb{C}$ be measurable, then
\begin{equation*}
\int\limits _{0}^{2\pi }| \mathcal{J}( f)| ^{p}\left( re^{i\varphi }\right) d\varphi \leqslant \frac{2^{1-2p}}{r^{p/2}}\int\limits _{0}^{2\pi } |f|^{p}\left(\sqrt{r} e^{i\varphi }\right) d\varphi .
\end{equation*}
\end{lema}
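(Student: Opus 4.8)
The plan is to reduce the statement to a direct substitution $z = re^{i\varphi}$ combined with the elementary inequality $|a-b|^{p}\leqslant |a|^{p}+|b|^{p}$, valid for $0<p<1$, which the paper has already used twice. First I would fix a branch of the square root and write, for $z=re^{i\varphi}$, $\sqrt{z}=\sqrt{r}\,e^{i\varphi/2}$, so that $-\sqrt{z}=\sqrt{r}\,e^{i(\varphi/2+\pi)}$. I would note in passing that although $\sqrt{z}$ is two-valued, the quantity $|\mathcal{J}(f)(z)|$ does not depend on that choice: interchanging the two branches merely swaps the roles of $\sqrt{z}$ and $-\sqrt{z}$ in the numerator, and $|a-b|=|b-a|$. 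With $|4\sqrt{z}|^{p}=4^{p}r^{p/2}$ this gives
\begin{equation*}
| \mathcal{J}(f)|^{p}\!\left(re^{i\varphi}\right)=\frac{\bigl|\,f(\sqrt{r}\,e^{i\varphi/2})-f(\sqrt{r}\,e^{i(\varphi/2+\pi)})\,\bigr|^{p}}{4^{p}r^{p/2}}.
\end{equation*}

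Next I would apply $|a-b|^{p}\leqslant|a|^{p}+|b|^{p}$ to the numerator and integrate over $\varphi\in[0,2\pi]$, which bounds the left-hand side by $\bigl(4^{p}r^{p/2}\bigr)^{-1}$ times the sum of $\int_{0}^{2\pi}|f(\sqrt{r}\,e^{i\varphi/2})|^{p}\,d\varphi$ and $\int_{0}^{2\pi}|f(\sqrt{r}\,e^{i(\varphi/2+\pi)})|^{p}\,d\varphi$. In the first integral I substitute $\psi=\varphi/2$ (so $d\varphi=2\,d\psi$, with $\psi$ running over $[0,\pi]$, i.e. the upper half of the circle $|w|=\sqrt{r}$), and in the second $\psi=\varphi/2+\pi$ (with $\psi$ running over $[\pi,2\pi]$, the lower half). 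The two resulting integrals glue together into $2\int_{0}^{2\pi}|f(\sqrt{r}\,e^{i\psi})|^{p}\,d\psi$. Collecting constants, $2\cdot 4^{-p}=2^{1-2p}$, which is precisely the claimed bound.

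There is no deep obstacle here; the only thing requiring care is the bookkeeping with the multivalued square root and the change of variables — in particular checking that the two half-circle integrals recombine into a full circle, and that the denominator $4\sqrt{z}$ contributes the factor $4^{p}r^{p/2}$ rather than $4^{p}r^{p}$. Measurability of $f$ is used only to ensure that all the integrals are well defined (with values in $[0,+\infty]$); no analyticity or continuity is needed, which is what makes this lemma applicable later to the "middle" part of the series where the Smirnov-lemma technique of Lemmas~\ref{lem:tail} and \ref{lem:start} fails.
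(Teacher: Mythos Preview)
Your proof is correct and follows essentially the same route as the paper: substitute $z=re^{i\varphi}$, apply $|a-b|^{p}\leqslant|a|^{p}+|b|^{p}$, then change variables $\psi=\varphi/2$ (and $\psi=\varphi/2+\pi$) so that the two half-circle integrals recombine into a single integral over $[0,2\pi]$. Your explicit remark that $|\mathcal{J}(f)(z)|$ is branch-independent is a nice addition not spelled out in the paper.
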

\begin{proof}
Let's note that
\begin{gather*}
\int\limits _{0}^{2\pi }| \mathcal{J}( f)| ^{p}\left( re^{i\varphi }\right) d\varphi =\int\limits _{0}^{2\pi }\left| \frac{f\left(\sqrt{r} e^{i\varphi }\right) -f\left( -\sqrt{r} e^{i\varphi }\right)}{4\sqrt{r} e^{i\varphi }}\right| ^{p} d\varphi \leqslant \frac{1}{4^{p} r^{p/2}}\int\limits _{0}^{2\pi }\left| f\left(\sqrt{r} e^{i\varphi /2}\right)\right| ^{p} +\left| -\sqrt{r} e^{i\varphi /2}\right| ^{p} d\varphi =\\
=\frac{1}{4^{p} r^{p/2}}\int\limits _{0}^{2\pi }\left| f\left(\sqrt{r} e^{i\varphi /2}\right)\right| ^{p} d\varphi +\frac{1}{4^{p} r^{p/2}}\int\limits _{0}^{2\pi }\left| f\left( -\sqrt{r} e^{i\varphi /2}\right)\right| ^{p} d\varphi =\begin{bmatrix}
\frac{\varphi }{2} =\vartheta \\
d\varphi =2d\vartheta 
\end{bmatrix} =\\
=\frac{2^{1-2p}}{r^{p/2}}\int\limits _{0}^{\pi }\left| f\left(\sqrt{r} e^{i\vartheta }\right)\right| ^{p} d\vartheta +\frac{2^{1-2p}}{r^{p/2}}\int\limits _{0}^{\pi }\left| f\left( -\sqrt{r} e^{i\vartheta }\right)\right| ^{p} d\vartheta =\\
=\frac{2^{1-2p}}{r^{p/2}}\int\limits _{0}^{2\pi }\left| f\left(\sqrt{r} e^{i\vartheta }\right)\right| ^{p} d\vartheta.
\end{gather*}
\end{proof}
The averaging operator is a proper analogue of integrating functions of the form $\displaystyle \sum _{n=1}^{\infty }\frac{c_{n}}{( z-t_{n})^{2}}$. Ordinary integration pushes these functions outside of this class, creating a sum of Cauchy kernels.
\begin{equation*}
\int \sum _{n=1}^{\infty }\frac{c_{n}}{( z-t_{n})^{2}} dz=-\sum _{n=1}^{\infty }\frac{c_{n}}{z-t_{n}} +const.
\end{equation*}
And averaging operator $\displaystyle \mathcal{J}$ takes a weighted sum of values, like integration, but does not move functions out of this class - the derivatives of sums of Cauchy kernels. Now let's prove the lemma that estimates the integral we need in the case of one term.
\begin{lema}\label{lema:IntTrivEst}
Let $\displaystyle t\in \mathbb{C}$ and $\displaystyle r >0$, $\displaystyle 0< p< \frac{1}{2}$, then
\begin{equation*}
\int\limits _{0}^{2\pi }\frac{d\varphi }{|re^{i\varphi } -t|^{2p}} \leqslant \frac{2\pi }{1-2p} \cdot \frac{1}{r^{2p}}.
\end{equation*}
\end{lema}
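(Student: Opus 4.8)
The plan is to reduce the two‑dimensional‑looking integral to an elementary one‑variable power integral by a crude but perfectly adequate pointwise lower bound on $|re^{i\varphi}-t|$.

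First I would normalise the position of the pole. If $t=0$ the integrand is the constant $r^{-2p}$ and the inequality is immediate, since $1-2p<1$. If $t\neq 0$, the substitution $\varphi\mapsto\varphi+\arg t$ does not change the integral of a $2\pi$‑periodic function and replaces $t$ by $|t|$; so I may assume $t=|t|\geqslant 0$ is real. Splitting $re^{i\varphi}-t$ into real and imaginary parts gives
\begin{equation*}
|re^{i\varphi}-t|^{2}=(r\cos\varphi-t)^{2}+r^{2}\sin^{2}\varphi\geqslant r^{2}\sin^{2}\varphi ,
\end{equation*}
an identity valid for all $\varphi$ and all $t$ (including the case $|t|=r$, when the pole sits on the contour; the singularity stays integrable because $2p<1$). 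Hence
\begin{equation*}
\int\limits_{0}^{2\pi}\frac{d\varphi}{|re^{i\varphi}-t|^{2p}}\leqslant\frac{1}{r^{2p}}\int\limits_{0}^{2\pi}\frac{d\varphi}{|\sin\varphi|^{2p}} ,
\end{equation*}
and the problem is reduced to showing $\int_{0}^{2\pi}|\sin\varphi|^{-2p}\,d\varphi\leqslant 2\pi/(1-2p)$.

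For that last integral I would use that $|\sin\varphi|$ is $\pi$‑periodic and symmetric about $\pi/2$, so the integral equals $4\int_{0}^{\pi/2}\sin^{-2p}\varphi\,d\varphi$, and then bound $\sin\varphi$ below by $\tfrac{2}{\pi}\varphi$ on $[0,\pi/2]$ (Jordan's inequality, i.e., concavity of $\sin$ on that interval). This gives
\begin{equation*}
4\int\limits_{0}^{\pi/2}\sin^{-2p}\varphi\,d\varphi\leqslant 4\left(\frac{\pi}{2}\right)^{2p}\int\limits_{0}^{\pi/2}\varphi^{-2p}\,d\varphi=4\left(\frac{\pi}{2}\right)^{2p}\cdot\frac{(\pi/2)^{1-2p}}{1-2p}=\frac{2\pi}{1-2p} ,
\end{equation*}
where $\int_{0}^{\pi/2}\varphi^{-2p}\,d\varphi$ converges precisely because $p<\tfrac{1}{2}$. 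Combining the two displayed estimates yields the claimed bound.

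There is no genuine obstacle here: the only point worth noting is that throwing away $(r\cos\varphi-t)^{2}$ is wildly wasteful away from $\varphi=0$ (for $t>0$ the point $-r$ is in fact far from $t$), and yet it costs nothing in the leading constant — Jordan's inequality is an equality at both ends of $[0,\pi/2]$, and correspondingly the bound $2\pi/((1-2p)r^{2p})$ is asymptotically sharp as $p\to 0^{+}$. The whole argument is therefore just the elementary power integral above, with no analytic machinery needed.
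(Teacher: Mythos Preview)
Your proof is correct and follows essentially the same route as the paper: reduce to real $t$ by rotation, drop $(r\cos\varphi-t)^{2}$ to bound below by $r^{2}\sin^{2}\varphi$, use the symmetry of $|\sin\varphi|$ to get $4\int_{0}^{\pi/2}$, and finish with Jordan's inequality $\sin\varphi\geqslant\tfrac{2}{\pi}\varphi$ and the elementary power integral. The only cosmetic difference is that you treat $t=0$ separately and add the remark on sharpness.
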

\begin{proof}
Let's note that by a change of variable, we can consider $\displaystyle t$ to be real
\begin{equation*}
\int\limits _{0}^{2\pi }\frac{d\varphi }{|re^{i\varphi } -t|^{2p}} =\int\limits _{0}^{2\pi }\frac{d\varphi }{\left| re^{i\varphi } -|t|e^{i\arg t}\right| ^{2p}} =\int\limits _{0}^{2\pi }\frac{d\varphi }{\left| re^{i( \varphi -\arg t)} -|t|\right| ^{2p}} =\begin{bmatrix}
\varphi -\arg t=\vartheta \\
d\varphi =d\vartheta 
\end{bmatrix} =\int\limits _{0}^{2\pi }\frac{d\vartheta }{\left| re^{i\vartheta } -|t|\right| ^{2p}}
\end{equation*}
Now
\begin{equation*}
|re^{i\vartheta } -|t||^{2} =( r\cos \vartheta -|t|)^{2} +r^{2}\sin^{2} \vartheta \geqslant r^{2}\sin^{2} \vartheta .
\end{equation*}
Therefore, let's estimate the integral, taking into account the inequality $\displaystyle \sin \vartheta \geqslant \frac{2}{\pi } \vartheta $ на $\displaystyle \left( 0;\frac{\pi }{2}\right)$:
\begin{gather*}
\int\limits _{0}^{2\pi }\frac{d\vartheta }{\left| re^{i\vartheta } -|t|\right| ^{2p}} \leqslant \frac{1}{r^{2p}}\int\limits _{0}^{2\pi }\frac{d\vartheta }{\sin^{2p} \vartheta } =\frac{4}{r^{2p}}\int\limits _{0}^{\frac{\pi }{2}}\frac{d\vartheta }{\sin^{2p} \vartheta } \leqslant \\
\leqslant \frac{2^{2-2p} \pi ^{2p}}{r^{2p}}\int\limits _{0}^{\frac{\pi }{2}}\frac{d\vartheta }{\vartheta ^{2p}} =\frac{4}{r^{2p}}\left(\frac{\pi }{2}\right)^{2p} \cdot \frac{1}{1-2p}\left(\frac{\pi }{2}\right)^{1-2p} =\frac{2\pi }{1-2p} \cdot \frac{1}{r^{2p}}.
\end{gather*}
\end{proof}
 Now, using a clever averaging method, let's first prove a stronger estimate, albeit with higher demands
\begin{lema}[The midpoint of the series with a restriction]\label{lema:Middle1}
Let $\displaystyle \{c_{n}\}_{n\in \mathbb{N}} ,\{t_{n}\}_{n\in \mathbb{N}} \subset \mathbb{C}$ such that $\displaystyle \lim _{n\rightarrow \infty } |t_{n} |=\infty $ and $\displaystyle \{t_{n}\}_{n\in \mathbb{N}}$ has no finite limit points, and
\begin{gather*}
\sum _{n=1}^{\infty }\frac{|c_{n} |}{|t_{n} |} < +\infty \ - \text{restriction},\\
\{t_{n}\}_{n\in \mathbb{N}} \ \text{has a finite convergence exponent}.
\end{gather*}
Then for each $\displaystyle 0< p< \frac{1}{2}$ and for each $\displaystyle \varepsilon  >0$
\begin{equation*}
\int\limits _{0}^{2\pi }\left| \sum _{\frac{r}{\sqrt{2}} \leqslant |t_{n} |\leqslant r\sqrt{2}}\frac{c_{n}}{\left( re^{i\varphi } -t_{n}\right)^{2}}\right| ^{p} d\varphi =o\left(\frac{1}{r^{p-\varepsilon }}\right) \ \text{при} \ r\rightarrow \infty.
\end{equation*}
\end{lema}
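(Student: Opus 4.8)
The obstruction here is that the poles $t_n$ with $\tfrac{r}{\sqrt{2}}\le |t_n|\le r\sqrt{2}$ sit on \emph{both} sides of the integration circle $|z|=r$, so that neither Lemma~\ref{lem:tail} nor Lemma~\ref{lem:start} applies directly. The plan is to peel this band off one ``half-layer'' at a time using the splitting of the summand into its even and odd parts under $z\mapsto -z$, which converts a circle of radius $r$ with poles $t_n$ into a circle of radius $r^2$ with poles $t_n^2$. Writing $f(z)=\sum_{\frac{r}{\sqrt2}\le|t_n|\le r\sqrt2}\tfrac{c_n}{(z-t_n)^2}$, one has $f=f_{\mathrm{odd}}+f_{\mathrm{even}}$ with $f_{\mathrm{odd}}(w)=\sum\tfrac{2c_nwt_n}{(w^2-t_n^2)^2}=2w\,\mathcal{J}(f)(w^2)$ and $f_{\mathrm{even}}(w)=\sum c_n\bigl(\tfrac{1}{w^2-t_n^2}+\tfrac{2t_n^2}{(w^2-t_n^2)^2}\bigr)$, and since $0<p<1$, $|f|^p\le|f_{\mathrm{odd}}|^p+|f_{\mathrm{even}}|^p$. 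Substituting $w=re^{i\varphi}$, $\theta=2\varphi$ and using $2\pi$-periodicity gives $\int_0^{2\pi}|f_{\mathrm{odd}}(re^{i\varphi})|^pd\varphi=(2r)^p\int_0^{2\pi}|\mathcal{J}(f)(r^2e^{i\theta})|^pd\theta$ and $\int_0^{2\pi}|f_{\mathrm{even}}(re^{i\varphi})|^pd\varphi=\int_0^{2\pi}\bigl|\sum c_n(\tfrac1{r^2e^{i\theta}-t_n^2}+\tfrac{2t_n^2}{(r^2e^{i\theta}-t_n^2)^2})\bigr|^pd\theta$.

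Next I would note that $\mathcal{J}(f)$ and the second-order part of the even transform are again derivatives of Cauchy-kernel sums, with poles $t_n^2\in[\tfrac{r^2}{2},2r^2]$, on the circle $|z|=r^2$, and split each according to $|t_n^2|<\tfrac{r^2}{\sqrt2}$, $|t_n^2|>r^2\sqrt2$, or $|t_n^2|\in[\tfrac{r^2}{\sqrt2},r^2\sqrt2]$. The first two pieces are estimated by Lemmas~\ref{lem:start} and \ref{lem:tail}, and the first-order part $\sum\tfrac{c_n}{z-t_n^2}$ — which, being of first order, has no ``middle'' difficulty — by Ostrovsky's estimate from the introduction. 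This is where the restriction $\sum|c_n|/|t_n|<\infty$ enters: on the band $|t_n|\asymp r$, so the new coefficients $c_nt_n$ and $2c_nt_n^2$ are $O(r|c_n|)$ and $O(r^2|c_n|)$, which is absorbed by the larger radius $r^2$ because $\sum_{\text{band}}|c_n|\le r\sqrt2\sum_{\text{band}}|c_n|/|t_n|=o(r)$; all these contributions come out to $o(r^{-2p})$, hence $o(r^{-p})$ even after the factor $(2r)^p$. What remains is once more a ``middle'' integral of the same shape, but the surviving $t_n$ now lie in the band $[\tfrac{r}{2^{1/4}},r\,2^{1/4}]$, of multiplicative width $2^{1/2}$ instead of $2$. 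Iterating this peeling $k$ times shrinks the surviving band to multiplicative width $2^{1/2^k}$, while the accumulated coefficients and prefactors stay explicit (of the shape $r^{O(2^k)}|c_n|$ and $2^{O(k)}r^{p(2^k-1)}$), and the error terms collected so far total $o(r^{-p})$ up to a factor $2^{O(k)}$.

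It then remains to bound the residual middle integral after $k$ steps, and here I would use the crude one-term bound of Lemma~\ref{lema:IntTrivEst} (legitimate since $0<p<\tfrac12$): it is at most the accumulated prefactor times $\tfrac{2\pi}{1-2p}\,r^{-2p}\sum_{\text{band}}|c_n|^p$, and by Hölder $\sum_{\text{band}}|c_n|^p\le N_k^{1-p}\bigl(\sum_{\text{band}}|c_n|\bigr)^p\le N_k^{1-p}\,o(r^p)$, where $N_k$ is the number of $t_n$ in the shrinking band. This is where the finite convergence exponent $\lambda$ of $\{t_n\}$ is used, giving $N_k=O(r^{\lambda+\delta})$ for every $\delta>0$. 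One then chooses $k=k(r)\to\infty$ slowly with $r$, trading the narrowness of the residual band against this pole count and against the $2^{O(k)}$ prefactors, so as to make everything $o(r^{-(p-\varepsilon)})$.

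I expect this last balancing to be the main obstacle. The residual ``middle'' term after $k$ peelings is still an instance of exactly the quantity being bounded, so the trivial estimate on it is sharp enough only once the $t_n$-band has been squeezed, and preventing the prefactors $(2r)^p$ produced by the odd half-layers (and the overall $2^{O(k)}$) from overwhelming that estimate forces $k$ to grow with $r$; the $\varepsilon$ in the conclusion is precisely the slack this tradeoff consumes, which is also why the stronger hypotheses — the restriction $\sum|c_n|/|t_n|<\infty$ and finite convergence exponent — are imposed here rather than merely the natural condition $\sum|c_n|/|t_n|^2<\infty$.
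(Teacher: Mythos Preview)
Your approach and the paper's are genuinely different. The paper does \emph{not} peel layers and does \emph{not} pass to radius $r^2$; instead it observes that the middle sum is itself $\mathcal{J}(g)$ for $g(w)=\sum_{\text{band}}\frac{c_n/\sqrt{t_n}}{(w-\sqrt{t_n})^2}$, applies Lemma~\ref{lem:ogrOpUsredne} to pass from radius $r$ to radius $\sqrt{r}$, and bootstraps: from a crude polynomial bound $\mathcal{X}(r)=o(r^{\rho+1})$ obtained via Lemma~\ref{lema:IntTrivEst}, each application of the averaging bound replaces the exponent $\alpha$ by $(\alpha-p)/2$, and after finitely many steps the exponent drops below $-p+\varepsilon$. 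No decomposition into tail/start/middle, and no invocation of Lemmas~\ref{lem:tail}--\ref{lem:start} or Ostrovsky, occurs inside the iteration; the index set stays fixed throughout.

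Your iteration goes in the opposite direction ($r\to r^2$), and your bookkeeping up through the residual formula is correct: after $k$ peelings, all the powers of $r$ in the prefactors, the radius $R_k=r^{2^k}$, and the inflated coefficients do cancel, leaving a residual controlled by $2^{O(k)}\,r^{-2p}\sum_{B_k}|c_n|^p$. The gap is in the last step. Your H\"older bound gives $\sum_{B_k}|c_n|^p\le N_k^{\,1-p}\,o(r^p)$, so you need $2^{O(k)}N_k^{\,1-p}=O(r^{\varepsilon})$. But the narrowness of the band $[r\cdot 2^{-1/2^{k}},\,r\cdot 2^{1/2^{k}}]$ does \emph{not} force $N_k$ to be small: the finite--convergence--exponent hypothesis only yields $N_k=O(r^{\lambda+\delta})$, with no improvement as the band shrinks. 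Concretely, if one places $\asymp 2^{m\lambda}$ of the $t_n$ on the circle $|z|=2^m$ (equally spaced) with $c_n=2^{-m}$ there, then the hypotheses hold for any $\lambda<2$, yet for $r=2^m$ one has $N_k\asymp r^{\lambda}$ for \emph{every} $k$, and your residual bound is $2^{O(k)}\,o(r^{-p})\cdot r^{\lambda(1-p)}$, which is not $o(r^{-p+\varepsilon})$ when $\lambda>0$. The ``trading'' you describe between band narrowness and pole count therefore does not take place in general, and the argument as written does not close; this is precisely why the paper avoids any attempt to count poles in a shrinking band and instead feeds an a~priori exponent bound back into itself through Lemma~\ref{lem:ogrOpUsredne}.
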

\begin{proof}
Let $\displaystyle \rho < +\infty $ be the convergence rate of $\displaystyle \{t_{n}\}_{n\in \mathbb{N}}$. Then let's first obtain a simple estimate using the lemma on the estimation of the integral (\ref{lema:IntTrivEst}).
\begin{gather*}
\mathcal{X}( r) =\int\limits _{0}^{2\pi }\left| \sum _{\frac{r}{\sqrt{2}} \leqslant |t_{n} |\leqslant r\sqrt{2}}\frac{c_{n}}{\left( re^{i\varphi } -t_{n}\right)^{2}}\right| ^{p} d\varphi \leqslant \int\limits _{0}^{2\pi }\sum _{\frac{r}{\sqrt{2}} \leqslant |t_{n} |\leqslant r\sqrt{2}}\frac{|c_{n} |^{p}}{|re^{i\varphi } -t_{n} |^{2p}} d\varphi =\sum _{\frac{r}{\sqrt{2}} \leqslant |t_{n} |\leqslant r\sqrt{2}} |c_{n} |^{p}\int\limits _{0}^{2\pi }\frac{d\varphi }{|re^{i\varphi } -t_{n} |^{2p}} \leqslant \\
\leqslant \frac{2\pi }{1-2p}\frac{1}{r^{2p}}\sum _{\frac{r}{\sqrt{2}} \leqslant |t_{n} |\leqslant r\sqrt{2}} |c_{n} |^{p} =\frac{2\pi }{1-2p}\frac{1}{r^{2p}}\sum _{\frac{r}{\sqrt{2}} \leqslant |t_{n} |\leqslant r\sqrt{2}}\frac{|c_{n} |^{p}}{|t_{n} |^{p+\rho +1}} \cdot|t_{n} |^{p+\rho +1}\lesssim \\
\lesssim r^{\rho +1-p}\sum _{\frac{r}{\sqrt{2}} \leqslant |t_{n} |\leqslant r\sqrt{2}}\frac{|c_{n} |^p}{|t_{n} |^p}\frac{1}{|t_{n} |^{\rho +1}} \leqslant r^{\rho +1-p}\underbrace{\sup\limits _{n\in \mathbb{N}}\frac{|c_{n} |^p}{|t_{n} |^p}}_{< +\infty } \cdot \underbrace{\sum\limits _{\frac{r}{\sqrt{2}} \leqslant |t_{n} |\leqslant r\sqrt{2}}\frac{1}{|t_{n} |^{\rho +1}}}_{=o( 1)} =o\left( r^{\rho +1-p}\right) =o\left( r^{\rho +1}\right)
\end{gather*}
Therefore, $\displaystyle \mathcal{X}( r) =o\left( r^{\rho +1}\right)$ for \textcolor[rgb]{0.82,0.01,0.11}{\textbf{all}} sequences $\displaystyle \{c_{n}\}_{n\in \mathbb{N}} ,\{t_{n}\}_{n\in \mathbb{N}}$ satisfying the conditions, now let's improve the estimate. Since
\begin{equation*}
\mathcal{J}\left(\sum _{\frac{r}{\sqrt{2}} \leqslant |t_{n} |\leqslant r\sqrt{2}}\frac{\frac{c_{n}}{\sqrt{t_{n}}}}{\left( z-\sqrt{t_{n}}\right)^{2}}\right) =\sum _{\frac{r}{\sqrt{2}} \leqslant |t_{n} |\leqslant r\sqrt{2}}\frac{c_{n}}{( z-t_{n})^{2}},
\end{equation*}
then by the lemma on the boundedness of averaging operator (\ref{lem:ogrOpUsredne})
\begin{equation*}
\mathcal{X}( r) =\int\limits _{0}^{2\pi }\left| \sum _{\frac{r}{\sqrt{2}} \leqslant |t_{n} |\leqslant r\sqrt{2}}\frac{c_{n}}{\left( re^{i\varphi } -t_{n}\right)^{2}}\right| ^{p} d\varphi \leqslant \frac{2^{1-2p}}{r^{p/2}}\underbrace{\int\limits _{0}^{2\pi }\left| \sum\limits _{\frac{r}{\sqrt{2}} \leqslant |t_{n} |\leqslant r\sqrt{2}}\frac{\frac{c_{n}}{\sqrt{t_{n}}}}{\left(\sqrt{r} e^{i\varphi } -\sqrt{t_{n}}\right)^{2}}\right| ^{p} d\varphi }_{=o\left( r^{\frac{\rho +1}{2}}\right) \ \text{by what has been proven} } =o\left( r^{\frac{\rho +1-p}{2}}\right).
\end{equation*}
Here, I used the generality of the estimate $\displaystyle \mathcal{X}( r) =o\left( r^{\rho +1}\right)$ and obtained a new estimate $\displaystyle \mathcal{X}( r) =o\left( r^{\frac{\rho +1-p}{2}}\right)$. Now, let's denote $\displaystyle d( \alpha ) =\frac{\alpha -p}{2}$, then repeating the process $\displaystyle n$ times, we get the estimate
\begin{equation*}
\mathcal{X}( r) =o\left( r^{\overbrace{d\circ \dotsc \circ d}^{n\ \text{раз}}( \rho +1)}\right) =o\left( r^{-p+\frac{\rho +1+p}{2^{n}}}\right).
\end{equation*}
Let's choose a large $\displaystyle n$ such that $\displaystyle \frac{\rho +1+p}{2^{n}} < \varepsilon $, then ultimately
\begin{equation*}
\mathcal{X}( r) =o\left(\frac{1}{r^{p-\varepsilon }}\right) \ \text{при} \ r\rightarrow \infty.
\end{equation*}
\end{proof}
 Now let's prove the main theorem by combining all the estimates.
\begin{thm}
Let $\displaystyle \{c_{n}\}_{n\in \mathbb{N}} ,\{t_{n}\}_{n\in \mathbb{N}} \subset \mathbb{C}$ such that $\displaystyle \lim _{n\rightarrow \infty } |t_{n} |=\infty $ and $\displaystyle \{t_{n}\}_{n\in \mathbb{N}}$ has no finite limit points, and
\begin{equation*}
\{t_{n}\}_{n\in \mathbb{N}} \ \text{has a finite convergence exponent}.
\end{equation*}
Then for each $\displaystyle 0< p< \frac{1}{2}$ and for each $\displaystyle \varepsilon  >0$
\begin{equation*}
\sum _{n=1}^{\infty }\frac{|c_{n} |}{|t_{n} |^{2}} < +\infty \ \Longrightarrow \int\limits _{0}^{2\pi }\left| \sum _{n=1}^{\infty }\frac{c_{n}}{\left( re^{i\varphi } -t_{n}\right)^{2}}\right| ^{p} d\varphi =o\left( r^{\varepsilon }\right) \ \text{при}\ r\rightarrow \infty,
\end{equation*}
\begin{equation*}
\sum _{n=1}^{\infty }\frac{|c_{n} |}{|t_{n} |} < +\infty \ \Longrightarrow \int\limits _{0}^{2\pi }\left| \sum _{n=1}^{\infty }\frac{c_{n}}{\left( re^{i\varphi } -t_{n}\right)^{2}}\right| ^{p} d\varphi =o\left(\frac{1}{r^{p-\varepsilon }}\right) \ \text{при} \ r\rightarrow \infty, 
\end{equation*}
\begin{equation*}
\sum _{n=1}^{\infty } |c_{n} |< +\infty \ \Longrightarrow \int\limits _{0}^{2\pi }\left| \sum _{n=1}^{\infty }\frac{c_{n}}{\left( re^{i\varphi } -t_{n}\right)^{2}}\right| ^{p} d\varphi =o\left(\frac{1}{r^{2p-\varepsilon }}\right) \ \text{при} \ r\rightarrow \infty. 
\end{equation*}
\end{thm}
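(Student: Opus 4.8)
The plan is to split the series at $|t_n|=r/\sqrt2$ and $|t_n|=r\sqrt2$, bound the three resulting partial sums separately, and recombine them with the subadditivity of $x\mapsto x^p$.

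Fix $0<p<\tfrac12$. For each $r>0$ write
\[
\sum_{n=1}^\infty\frac{c_n}{(re^{i\varphi}-t_n)^2}=T_r(\varphi)+M_r(\varphi)+H_r(\varphi),
\]
where $T_r$, $M_r$, $H_r$ are the partial sums over $|t_n|>r\sqrt2$, over $\tfrac r{\sqrt2}\le|t_n|\le r\sqrt2$, and over $|t_n|<\tfrac r{\sqrt2}$ respectively; for every $r$ these three index ranges partition $\mathbb N$. Because $0<p<1$ one has $|a+b+c|^p\le|a|^p+|b|^p+|c|^p$, so after integrating,
\[
\int_0^{2\pi}\Bigl|\sum_{n=1}^\infty\frac{c_n}{(re^{i\varphi}-t_n)^2}\Bigr|^p d\varphi\le\int_0^{2\pi}|T_r|^p d\varphi+\int_0^{2\pi}|M_r|^p d\varphi+\int_0^{2\pi}|H_r|^p d\varphi.
\]
Lemma \ref{lem:tail} bounds the first term by $\frac{8\pi}{\cos(\pi p/2)}\bigl(\sum_{|t_n|>r\sqrt2}|c_n|/|t_n|^2\bigr)^p$ and Lemma \ref{lem:start} bounds the third by $\frac{8\pi}{\cos(\pi p/2)}\bigl(\tfrac1{r^2}\sum_{|t_n|<r/\sqrt2}|c_n|\bigr)^p$; the middle term is Lemma \ref{lema:Middle1} together with its two analogues.

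I would then read off the decay of the tail and head bounds under each hypothesis. For the tail, factor out a power of $|t_n|$: under $\sum|c_n|/|t_n|^2<\infty$ the sum $\sum_{|t_n|>r\sqrt2}|c_n|/|t_n|^2$ is the tail of a convergent series, hence $o(1)$; under $\sum|c_n|/|t_n|<\infty$, bounding $|t_n|^{-1}\le(r\sqrt2)^{-1}$, it is $O(1/r)$; under $\sum|c_n|<\infty$, bounding $|t_n|^{-2}\le(r\sqrt2)^{-2}$, it is $O(1/r^2)$. Raising to the power $p$ gives tail contributions $o(1)$, $O(r^{-p})$, $O(r^{-2p})$. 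For the head the same factoring — refined, in the weakest case, by the extra split at $|t_n|=\sqrt r$ already performed inside the proof of Lemma \ref{lem:start} — makes $\tfrac1{r^2}\sum_{|t_n|<r/\sqrt2}|c_n|$ equal to $o(1)$, $O(1/r)$, $O(1/r^2)$ in the three cases, hence the same three orders. These match the orders claimed, so the theorem reduces to proving that $\int_0^{2\pi}|M_r|^p d\varphi$ is $o(r^\varepsilon)$, $o(r^{-p+\varepsilon})$, $o(r^{-2p+\varepsilon})$ respectively.

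The middle range $\tfrac r{\sqrt2}\le|t_n|\le r\sqrt2$ is where the genuine work lies, and I expect it to be the main obstacle. For $\sum|c_n|/|t_n|<\infty$ it is exactly Lemma \ref{lema:Middle1}. For the other two hypotheses I would rerun the proof of that lemma with the weight $|c_n|/|t_n|^2$, respectively $|c_n|$, in place of $|c_n|/|t_n|$: first get a crude bound by integrating term by term with Lemma \ref{lema:IntTrivEst} (this is where $p<\tfrac12$ is used) and invoking the finite convergence exponent, in the form $\sum_{|t_n|\ge r/\sqrt2}|t_n|^{-(\rho+1)}=o(1)$, to promote it to $o(r^{\rho+1})$; then sharpen it by the boundedness of the averaging operator $\mathcal{J}$ (Lemma \ref{lem:ogrOpUsredne}), iterating the identity $\mathcal{J}\bigl(\sum\frac{c_n/\sqrt{t_n}}{(z-\sqrt{t_n})^2}\bigr)=\sum\frac{c_n}{(z-t_n)^2}$ from the proof of Lemma \ref{lema:Middle1} so that a valid crude exponent $\alpha$ is each time replaced by $\tfrac{\alpha-p}2$, whose iterates descend to $-p$; the target is adjusted to $0$ and to $-2p$ for the other two weights. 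The point I expect to wrestle with is that each iteration must stay inside the hypothesis class: the substitution $c_n\mapsto c_n/\sqrt{t_n}$, $t_n\mapsto\sqrt{t_n}$ must preserve the relevant summability (and the finiteness of the convergence exponent), and where this preservation is only approximate the crude exponent has to be tracked carefully — this is the one place where any remaining gap would hide. With the three middle estimates in hand, combining them with the tail and head bounds and taking the largest order in each of the three cases closes the proof.
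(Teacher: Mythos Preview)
Your decomposition into tail, middle, and head and your handling of the tail and head via Lemmas~\ref{lem:tail} and~\ref{lem:start} match the paper exactly, and for the hypothesis $\sum|c_n|/|t_n|<\infty$ the middle range is indeed precisely Lemma~\ref{lema:Middle1}. The gap is where you yourself locate it: the middle range under the other two hypotheses.

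Your plan there --- rerun the averaging iteration with a different weight and an ``adjusted target'' --- does not go through as written. The recursion $d(\alpha)=(\alpha-p)/2$ arises from the factor $r^{-p/2}$ in Lemma~\ref{lem:ogrOpUsredne} together with the passage $r\mapsto\sqrt r$; its fixed point is $-p$ regardless of which summability condition you impose on $c_n$. So under $\sum|c_n|<\infty$ the substitution $c_n\mapsto c_n/\sqrt{t_n}$, $t_n\mapsto\sqrt{t_n}$ does preserve the hypothesis, but iterating can only drive the exponent toward $-p$, not the required $-2p$. Under $\sum|c_n|/|t_n|^2<\infty$ the substitution fails to preserve the class outright: one would need $\sum|c_n|/|t_n|^{3/2}<\infty$, which is strictly stronger (take $t_n=2^n$, $c_n=4^n/n^2$), so the iteration cannot even be started a second time.

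The paper does not try to rerun the iteration. Instead it reduces both remaining cases, on the middle annulus, to the case already handled and to Ostrovskii's first-order theorem by the partial-fraction identities
\[
\frac{c_n}{(z-t_n)^2}=\frac{c_n}{t_n^2}-z\cdot\frac{c_n/t_n^2}{z-t_n}+z\cdot\frac{c_n/t_n}{(z-t_n)^2},
\qquad
\frac{c_n}{(z-t_n)^2}=\frac{c_n}{z^2}+\frac{2}{z^2}\cdot\frac{c_nt_n}{z-t_n}+\frac{1}{z^2}\cdot\frac{c_nt_n^2}{(z-t_n)^2}.
\]
In the first identity the new second-order coefficients $c_n/t_n$ satisfy $\sum|c_n/t_n|/|t_n|<\infty$, so Lemma~\ref{lema:Middle1} applies and the prefactor $z$ costs $r^p$, giving $o(r^\varepsilon)$. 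In the second the prefactor $1/z^2$ supplies the missing $r^{-2p}$, while the second-order piece now has coefficients $c_nt_n^2$ with $\sum|c_nt_n^2|/|t_n|^2=\sum|c_n|<\infty$, to which the just-proved $o(r^\varepsilon)$ bound applies. This sidesteps both the class-preservation failure and the fixed-point mismatch that block your proposed route.
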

\begin{proof}
\begin{enumerate}
\item Let $\displaystyle \sum _{n=1}^{\infty }\frac{|c_{n} |}{|t_{n} |} < +\infty $. Then, using the lemmas (\ref{lem:tail}), (\ref{lem:start}), (\ref{lema:Middle1}) about the estimates of integrals
\begin{gather*}
\int\limits _{0}^{2\pi }\left| \sum _{n=1}^{\infty }\frac{c_{n}}{\left( re^{i\varphi } -t_{n}\right)^{2}}\right| ^{p} d\varphi \leqslant \int\limits _{0}^{2\pi }\left| \sum _{|t_{n} |< \frac{r}{\sqrt{2}}}\frac{c_{n}}{\left( re^{i\varphi } -t_{n}\right)^{2}}\right| ^{p} d\varphi +\int\limits _{0}^{2\pi }\left| \sum _{\frac{r}{\sqrt{2}} \leqslant |t_{n} |\leqslant r\sqrt{2}}\frac{c_{n}}{\left( re^{i\varphi } -t_{n}\right)^{2}}\right| ^{p} d\varphi +\\
+\int\limits _{0}^{2\pi }\left| \sum _{|t_{n} | >r\sqrt{2}}\frac{c_{n}}{\left( re^{i\varphi } -t_{n}\right)^{2}}\right| ^{p} d\varphi \lesssim \left(\frac{1}{r^{2}}\sum\limits _{|t_{n} |< \frac{r}{\sqrt{2}}} |c_{n} |\right)^{p} +o\left(\frac{1}{r^{p-\varepsilon }}\right) +\left(\sum\limits _{|t_{n} | >r\sqrt{2}}\frac{|c_{n} |}{|t_{n} |^{2}}\right)^{p}.
\end{gather*}
Let's estimate these sums
\begin{gather*}
\frac{1}{r^{2}}\sum\limits _{|t_{n} |< \frac{r}{\sqrt{2}}} |c_{n} |=\frac{1}{r^{2}}\sum\limits _{|t_{n} |< \sqrt{r}} |c_{n} |+\frac{1}{r^{2}}\sum\limits _{\sqrt{r} < |t_{n} |< \frac{r}{\sqrt{2}}} |c_{n} |=\frac{1}{r^{2}}\sum\limits _{|t_{n} |< \sqrt{r}}\frac{|c_{n} |}{|t_{n} |} \cdot \underbrace{|t_{n} |}_{< \sqrt{r}} +\frac{1}{r^{2}}\sum\limits _{\sqrt{r} \leqslant |t_{n} |< \frac{r}{\sqrt{2}}}\frac{|c_{n} |}{|t_{n} |} \cdot \underbrace{|t_{n} |}_{< r/\sqrt{2}} \leqslant \\
\leqslant \frac{1}{r\sqrt{r}}\sum\limits _{|t_{n} |< \sqrt{r}}\frac{|c_{n} |}{|t_{n} |} +\frac{1}{r\sqrt{2}}\sum\limits _{\sqrt{r} \leqslant |t_{n} |< \frac{r}{\sqrt{2}}}\frac{|c_{n} |}{|t_{n} |} =\frac{1}{r}\left(\frac{1}{\sqrt{r}}\underbrace{\sum\limits _{|t_{n} |< \sqrt{r}}\frac{|c_{n} |}{|t_{n} |}}_{=O( 1)} +\frac{1}{\sqrt{2}}\underbrace{\sum\limits _{\sqrt{r} \leqslant |t_{n} |< \frac{r}{\sqrt{2}}}\frac{|c_{n} |}{|t_{n} |}}_{=o( 1)}\right) =o\left(\frac{1}{r}\right)
\end{gather*}
and
\begin{equation*}
\sum\limits _{|t_{n} | >r\sqrt{2}}\frac{|c_{n} |}{|t_{n} |^{2}} =\sum\limits _{|t_{n} | >r\sqrt{2}}\frac{|c_{n} |}{|t_{n} |} \cdotp \frac{1}{\underbrace{|t_{n} |}_{ >r\sqrt{2}}} < \frac{1}{r\sqrt{2}}\underbrace{\sum\limits _{|t_{n} | >r\sqrt{2}}\frac{|c_{n} |}{|t_{n} |}}_{=o( 1)} =o\left(\frac{1}{r}\right).
\end{equation*}
Therefore, we obtain an estimate for the original integral.
\begin{equation*}
\int\limits _{0}^{2\pi }\left| \sum _{n=1}^{\infty }\frac{c_{n}}{\left( re^{i\varphi } -t_{n}\right)^{2}}\right| ^{p} d\varphi =o\left(\frac{1}{r^{p}}\right) +o\left(\frac{1}{r^{p-\varepsilon }}\right) +o\left(\frac{1}{r^{p}}\right) =o\left(\frac{1}{r^{p-\varepsilon }}\right).
\end{equation*}
\item Now, let $\displaystyle \sum _{n=1}^{\infty }\frac{|c_{n} |}{|t_{n} |^{2}} < +\infty $. Then, similarly using the lemmas (\ref{lem:tail}), (\ref{lem:start}) about the estimates of integrals
\begin{equation*}
\int\limits _{0}^{2\pi }\left| \sum _{n=1}^{\infty }\frac{c_{n}}{\left( re^{i\varphi } -t_{n}\right)^{2}}\right| ^{p} d\varphi \lesssim \left(\frac{1}{r^{2}}\sum\limits _{|t_{n} |< \frac{r}{\sqrt{2}}} |c_{n} |\right)^{p} +\left(\sum\limits _{|t_{n} | >r\sqrt{2}}\frac{|c_{n} |}{|t_{n} |^{2}}\right)^{p} +\int\limits _{0}^{2\pi }\left| \sum _{\frac{r}{\sqrt{2}} \leqslant |t_{n} |\leqslant r\sqrt{2}}\frac{c_{n}}{\left( re^{i\varphi } -t_{n}\right)^{2}}\right| ^{p} d\varphi.
\end{equation*}
Let's estimate the sums in a similar manner
\begin{gather*}
\frac{1}{r^{2}}\sum\limits _{|t_{n} |< \frac{r}{\sqrt{2}}} |c_{n} |=\frac{1}{r^{2}}\sum\limits _{|t_{n} |< \sqrt{r}} |c_{n} |+\frac{1}{r^{2}}\sum\limits _{\sqrt{r} < |t_{n} |< \frac{r}{\sqrt{2}}} |c_{n} |=\frac{1}{r^{2}}\sum\limits _{|t_{n} |< \sqrt{r}}\frac{|c_{n} |}{|t_{n} |^{2}} \cdot \underbrace{|t_{n} |^{2}}_{< r} +\frac{1}{r^{2}}\sum\limits _{\sqrt{r} \leqslant |t_{n} |< \frac{r}{\sqrt{2}}}\frac{|c_{n} |}{|t_{n} |^{2}} \cdot \underbrace{|t_{n} |^{2}}_{< r^{2} /2} \leqslant \\
\leqslant \frac{1}{r}\underbrace{\sum\limits _{|t_{n} |< \sqrt{r}}\frac{|c_{n} |}{|t_{n} |^{2}}}_{=O( 1)} +\frac{1}{2}\underbrace{\sum\limits _{\sqrt{r} \leqslant |t_{n} |< \frac{r}{\sqrt{2}}}\frac{|c_{n} |}{|t_{n} |^{2}}}_{=o( 1)} =o( 1)
\end{gather*}
and
\begin{equation*}
\sum\limits _{|t_{n} | >r\sqrt{2}}\frac{|c_{n} |}{|t_{n} |^{2}} =o(1).
\end{equation*}
However, the remaining integral cannot be estimated using lemma (\ref{lema:Middle1}) because we have a weaker condition $\displaystyle \sum _{n=1}^{\infty }\frac{|c_{n} |}{|t_{n} |^{2}} < +\infty$ instead of $\displaystyle \sum _{n=1}^{\infty }\frac{|c_{n} |}{|t_{n} |} < +\infty$. However, it is better to represent the function as
\begin{equation*}
\sum \frac{c_{n}}{( z-t_{n})^{2}} =\sum \frac{c_{n}}{t_{n}^{2}} -z\cdot \sum \frac{\frac{c_{n}}{t_{n}^{2}}}{z-t_{n}} +z\cdot \sum \frac{\frac{c_{n}}{t_{n}}}{( z-t_{n})^{2}}
\end{equation*}
and now let's estimate the integral \footnote{In Ostrovsky's theorem (\ref{eq:Ostrov}) and in the lemmas above, it was assumed that $\displaystyle c_n$ does not depend on $\displaystyle r$. However, the estimates in the theorems will still hold true if $\displaystyle c_n=c_n(r)$, and the statements about asymptotics will also remain true if, for example, we replace the convergence condition $\displaystyle \sum<+\infty$ with $\sup\limits_{r}\sum<+\infty$.}
\begin{gather*}
\int\limits _{0}^{2\pi }\left| \sum _{\frac{r}{\sqrt{2}} \leqslant |t_{n} |\leqslant r\sqrt{2}}\frac{c_{n}}{\left( re^{i\varphi } -t_{n}\right)^{2}}\right| ^{p} d\varphi \leqslant 2\pi \underbrace{\left|\sum\limits _{\frac{r}{\sqrt{2}} \leqslant |t_{n} |\leqslant r\sqrt{2}}\frac{c_{n}}{t_{n}^{2}}\right|^p }_{=o( 1)} +r^{p}\underbrace{\int\limits _{0}^{2\pi }\left| \sum\limits _{n=1}^{\infty }\frac{\frac{c_{n}}{t_{n}^{2}} \cdot \mathbb{1}_{\left\{r/\sqrt{2} \leqslant |t_{n} |\leqslant r\sqrt{2}\right\}}}{re^{i\varphi } -t_{n}}\right| ^{p} d\varphi }_{=o( 1) \ \ \text{by Ostrovsky theorem (\ref{eq:Ostrov})}} +\\
+r^{p}\underbrace{\int\limits _{0}^{2\pi }\left| \sum\limits _{n=1}^{\infty }\frac{\frac{c_{n}}{t_{n}} \cdot \mathbb{1}_{\left\{r/\sqrt{2} \leqslant |t_{n} |\leqslant r\sqrt{2}\right\}}}{\left( re^{i\varphi } -t_{n}\right)^{2}}\right| }_{=o\left( 1/r^{p-\varepsilon }\right) \ \text{by lemma (\ref{lema:Middle1})}}^{p} d\varphi =o\left( r^{\varepsilon }\right).
\end{gather*}
Therefore, we obtain an estimate for the original integral
\begin{equation*}
\int\limits _{0}^{2\pi }\left| \sum _{n=1}^{\infty }\frac{c_{n}}{\left( re^{i\varphi } -t_{n}\right)^{2}}\right| ^{p} d\varphi =o( 1) +o( 1) +o\left( r^{\varepsilon }\right) =o\left( r^{\varepsilon }\right).
\end{equation*}
\item Now, let $\displaystyle \sum _{n=1}^{\infty } |c_{n} |< +\infty $. Then, similarly using the lemmas (\ref{lem:tail}), (\ref{lem:start}) about the estimates of integrals
\begin{equation*}
\int\limits _{0}^{2\pi }\left| \sum _{n=1}^{\infty }\frac{c_{n}}{\left( re^{i\varphi } -t_{n}\right)^{2}}\right| ^{p} d\varphi \lesssim \left(\frac{1}{r^{2}}\sum\limits _{|t_{n} |< \frac{r}{\sqrt{2}}} |c_{n} |\right)^{p} +\left(\sum\limits _{|t_{n} | >r\sqrt{2}}\frac{|c_{n} |}{|t_{n} |^{2}}\right)^{p} +\int\limits _{0}^{2\pi }\left| \sum _{\frac{r}{\sqrt{2}} \leqslant |t_{n} |\leqslant r\sqrt{2}}\frac{c_{n}}{\left( re^{i\varphi } -t_{n}\right)^{2}}\right| ^{p} d\varphi.
\end{equation*}
Let's estimate the sums as
\begin{equation*}
\frac{1}{r^{2}}\sum\limits _{|t_{n} |< \frac{r}{\sqrt{2}}} |c_{n} |=O\left(\frac{1}{r^{2}}\right)
\end{equation*}
and
\begin{equation*}
\sum\limits _{|t_{n} | >r\sqrt{2}}\frac{|c_{n} |}{|t_{n} |^{2}} =\sum\limits _{|t_{n} | >r\sqrt{2}} |c_{n} |\cdot \frac{1}{\underbrace{|t_{n} |^{2}}_{ >2r^{2}}} \leqslant \frac{1}{2r^{2}}\sum\limits _{|t_{n} | >r\sqrt{2}} |c_{n} |=o\left(\frac{1}{r^{2}}\right).
\end{equation*}
To estimate the remaining integral, one can, of course, apply the theorem above, but the estimate will be weak. Therefore we note that
\begin{equation*}
\sum \frac{c_{n}}{( z-t_{n})^{2}} =\frac{1}{z^{2}}\sum c_{n} +\frac{2}{z^{2}} \cdot \sum \frac{c_{n} t_{n}}{z-t_{n}} +\frac{1}{z^{2}} \cdot \sum \frac{c_{n} t_{n}^{2}}{( z-t_{n})^{2}},
\end{equation*}
whence
\begin{gather*}
\int\limits _{0}^{2\pi }\left| \sum _{\frac{r}{\sqrt{2}} \leqslant |t_{n} |\leqslant r\sqrt{2}}\frac{c_{n}}{\left( re^{i\varphi } -t_{n}\right)^{2}}\right| ^{p} d\varphi \leqslant \frac{2\pi }{r^{2p}} \underbrace{\left|\sum\limits _{\frac{r}{\sqrt{2}} \leqslant |t_{n} |\leqslant r\sqrt{2}} c_{n}\right|^p}_{=o( 1)} +\frac{2}{r^{2p}}\underbrace{\int\limits _{0}^{2\pi }\left| \sum\limits _{n=1}^{\infty }\frac{c_{n} t_{n} \cdot \mathbb{1}_{\left\{r/\sqrt{2} \leqslant |t_{n} |\leqslant r\sqrt{2}\right\}}}{re^{i\varphi } -t_{n}}\right| ^{p} d\varphi }_{=o( 1) \ \text{теорема Островского (\ref{eq:Ostrov})}} +\\
+\frac{1}{r^{2p}}\underbrace{\int\limits _{0}^{2\pi }\left| \sum\limits _{n=1}^{\infty }\frac{c_{n} t_{n}^{2} \cdot \mathbb{1}_{\left\{r/\sqrt{2} \leqslant |t_{n} |\leqslant r\sqrt{2}\right\}}}{\left( re^{i\varphi } -t_{n}\right)^{2}}\right| ^{p} d\varphi }_{=o\left( r^{\varepsilon }\right) \ \text{лемма (\ref{lema:Middle1})}} =o\left(\frac{1}{r^{2p-\varepsilon }}\right).
\end{gather*}
Therefore, we obtain an estimate for the original integral
\begin{equation*}
\int\limits _{0}^{2\pi }\left| \sum _{n=1}^{\infty }\frac{c_{n}}{\left( re^{i\varphi } -t_{n}\right)^{2}}\right| ^{p} d\varphi =O\left(\frac{1}{r^{2p}}\right) +o\left(\frac{1}{r^{2p}}\right) +o\left(\frac{1}{r^{2p-\varepsilon }}\right) =o\left(\frac{1}{r^{2p-\varepsilon }}\right).
\end{equation*}
\end{enumerate}
\end{proof}

\end{document}